\documentclass[a4paper,10pt]{article}
\usepackage{amsmath,amssymb,amsthm,amscd}
\usepackage[mathscr]{eucal}
\usepackage[all]{xy}
\usepackage{graphicx}

\def\bn{\bigskip\noindent}

\setlength{\oddsidemargin}{5mm}  
\setlength{\evensidemargin}{5mm} 
\setlength{\topmargin}{0mm}      
\setlength{\topskip}{5mm}       
\setlength{\headsep}{7mm}      
\setlength{\headheight}{10mm}    
\setlength{\textwidth}{150mm}    
\setlength{\textheight}{225mm}  

\makeatletter
  
  \@addtoreset{equation}{section}
\makeatother

\newcommand{\plim}{\varprojlim}
\newcommand{\mcal}{\mathcal}
\newcommand{\mbf}{\mathbf}

\newcommand{\mbb}{\mathbb}
\newcommand{\mrm}{\mathrm}

\newcommand{\divi}{{\rm \mathchar`- div}}
\def\div{_\mathrm{div}}
\def\Gm{\mathbb{G}_\mathrm{m}}
\def\GK{G_K}
\def\Im{\mathrm{Im}}
\def\kgm{k_{g,\mbf{m}}}
\def\Mdiv{M_\mathrm{div}}
\def\Melldiv{M_{\ell\text{\rm{-div}}}}
\def\mulinf{\mu_{\ell^\infty}}
\def\Q{\mathbb{Q}}
\def\Ql{\Q_\ell}
\def\Qp{\Q_p}
\def\tor{_\mathrm{tor}}
\def\Z{\mathbb{Z}}
\def\Zl{{\Z_\ell}}
\def\Zp{{\Z_p}}
\newtheorem{theorem}{Theorem}[section]
\newtheorem{corollary}[theorem]{Corollary}
\newtheorem{lemma}[theorem]{Lemma}
\newtheorem{proposition}[theorem]{Proposition}

\theoremstyle{definition}
\newtheorem{definition}[theorem]{Definition}
\newtheorem{remark}[theorem]{Remark}

\newtheorem{example}[theorem]{Example}

\newtheorem*{question}{Question}

\title{A note on highly Kummer-faithful fields}

\author{Yoshiyasu Ozeki\footnote{
Department of Mathematics and Physics, 
Faculty of Science, Kanagawa University,
2946 Tsuchiya, Hiratsuka-shi, Kanagawa 259-1293, JAPAN
\endgraf
e-mail: {\tt ozeki@kanagawa-u.ac.jp}} 
and 
Yuichiro Taguchi\footnote{
Tokyo Institute of Technology, 
2-12-1 Ookayama, Meguro-ku, Tokyo 152-8551, JAPAN
\endgraf
e-mail: {\tt taguchi@math.titech.ac.jp}}
}

\begin{document}
\maketitle

\begin{abstract}
We introduce a notion of {\it highly Kummer-faithful fields} and 
study its relationship with the notion of Kummer-faithful fields.
We also give some examples of  highly Kummer-faithful fields.
For example, if  $k$  is a number field of finite degree over  $\Q$, 
$g$  is an integer $>0$ and 
$\mbf{m}=(m_p)_p$  is a family of non-negative integers, 
where $p$ ranges over all prime numbers,
then the extension field 
$k_{g,\mbf{m}}$  obtained by adjoining to  $k$  
all coordinates of the elements of the $p^{m_p}$-torsion subgroup    
$A[p^{m_p}]$  of  $A$  for all semi-abelian varieties  
$A$  over  $k$  of dimension at most  $g$  
and all prime numbers $p$, is highly Kummer-faithful. 
\end{abstract}

\section{Introduction}

The notion of a Kummer-faithful field, 
defined by Mochizuki \cite{Mo}, 
plays an important role in anabelian geometry; 
it is defined by the non-divisibility of 
the Mordell-Weil groups of semi-abelian varieties over the field 
(see Definition \ref{Def:KF}), 
so that Kummer theory works effectively for the purpose of 
reconstructing geometric objects over such a field.  
\if0
The notion of a Kummer-faithful field, 
defined by Mochizuki \cite{Mo}, 
plays an important role in anabelian geometry.
The Kummer-faithfulness is a condition for fields 
which guarantees the vanishing of 
the divisible parts of the Mordell-Weil groups of semi-abelian varieties
(see Definition \ref{Def:KF}), 
so that Kummer theory works effectively for the purpose of 
anabelian geometry over such fields.  
\fi
Recently, 
some versions of the  Grothendieck Conjecture in anabeian geometry are 
being generalized from ones over sub-$p$-adic fields to 
ones over Kummer-faithful fields (see, e.g., \cite{Ho}). 
Note that a sub-$p$-adic field (for a prime number $p$) 
is Kummer-faithful (\cite[Remark 1.5.4]{Mo}). 
In particular,  
finite extensions of $\Q(t_1,\dots ,t_r)$  and  $\Qp(t_1,\dots ,t_r)$ 
are Kummer-faithful.
On the other hand, 
many of the infinite algebraic extensions of  $\Q$  are not Kummer-faithful.
For example, 
the field  $\Q(\mu_{p^{\infty}})$  generated over $\Q$
by all $p$-power roots of unity is not Kummer-faithful.
In general, it is not easy to determine 
whether a given field is Kummer-faithful or not. 
In this paper, 
we give in \S2 a sufficient condition for a {\it Galois} extension of 
a finite number field to be 
Kummer-faithful in terms of ramification theory 
(Theorem \ref{Main:HKF} and Corollary \ref{Main:HKF:cor}), 
and in 3 construct in \S3 examples of 
Kummer-faithful fields of infinite degree over  $\Q$
which are not sub-$p$-adic for any prime number  $p$. 
Some similar examples have already been studied in Ohtani \cite{Oh};
we give a more systematic treatment in this paper. 

Furthermore,  we define a closely related notion 
``highly Kummer-faithful field" 
(Definition \ref{Def:HKF})
and study its relation to Kummer-faithfulness. 
High Kummer-faithfulness is defined by the vanishing of the 
coinvariant spaces of the \'etale cohomology groups (with Tate twists) of 
proper smooth varieties over the field.
Precisely speaking, 
we say that a perfect field $K$ of characteristic $p_K$ is highly Kummer-faithful
if, for every finite extension $L$ of $K$ and 
every proper smooth 
variety $X$ over $L$, it holds that 
$$
H^i_{\rm \acute{e}t}(X_{\overline{K}},\mbb{Q}_{\ell}(r))_{G_L}=0\quad 
\text{for any prime number  $\ell\not= p_K$  and  
any  $i, r$  with  $i\not=2r$.}
$$ 
Although not a priori from the definition, 
it follows that,  under a certain condition,
high Kummer-faithfulness implies Kummer-faithfulness 
(Proposition \ref{KFHKF}). 
The above mentioned sufficient condition in \S2 applies in fact 
also to high Kummer-faithfulness, and our examples of fields 
in \S3 are highly Kummer-faithful.

\vspace{5mm}
\noindent
{\bf Notation.}
A {\it number field} is a finite 
extension of the field  $\mbb{Q}$ of rational numbers.
A {\it $p$-adic field} is a finite extension of the field  $\mbb{Q}_p$ of $p$-adic numbers.
For any field $F$, 
we fix a separable closure $\overline{F}$ of $F$
and we  denote by $G_F$
the absolute Galois group $\mrm{Gal}(\overline{F}/F)$ of $F$. 
If a group  $G$  acts on a vector space  $H$,  
we denote by  $H^G$  (resp.\  $H_G$)  the invariants (resp.\ coinvariants) 
of  $H$  by  $G$; thus   $H^G$  (resp.\  $H_G$)   is the maximal subspace 
(resp.\ quotient space) of  $H$  on which  $G$  acts trivially.

%

\section{Highly Kummer-faithful fields}

In this section, we recall the definition of  Kummer-faithful fields 
(cf.\ \cite[Def.\ 1.5]{Mo}, \cite[Def.\ 1.2]{Ho})
and define a notion of highly Kummer-faithful fields. 
We also study the relationship between Kummer-faithfulness, 
high Kummer-faithfulness and some other properties.

\subsection{Definitions}

\begin{definition} 
Let $M$ be a $\mbb{Z}$-module and $\ell$ a prime number. 
We say that 
$P\in M$ is {\it divisible} (resp.\ {\it $\ell$-divisible}) 
if, for any integer $n>0$, 
there exists $Q\in M$ such that $P=nQ$ (resp.\ $P=\ell^nQ$).  
We denote by $M_{\mrm{div}}$ (resp.\ $M_{\ell\divi}$)
the set of divisible (resp.\ $\ell$-divisible) elements of $M$, that is,
$$
M_{\mrm{div}}=\bigcap_{n>0} n M,     \qquad
M_{\ell\divi}=\bigcap_{n>0}\ell^n M.
$$ 
\end{definition}
Note that 
$P\in M$  is divisible if and only if it is $\ell$-divisible for all primes $\ell$; 
we have  
$\Mdiv=\bigcap_{\ell}\Melldiv$.

\begin{definition} 
\label{Def:KF}
A perfect field $K$ is {\it Kummer-faithful} 
if, for every finite extension $L$ of $K$ and every semi-abelian variety 
$A$ over $L$, it holds that 
$$
A(L)_{\rm div} =0.
$$ 
\end{definition}

The facts below immediately follow from the definition of
Kummer-faithfulness.

\begin{itemize}
\item[(1)] 
If a perfect field  $K$  is Kummer-faithful,
then so is any subfield of  $K$.
\item[(2)] 
Let  $K'$  be a finite extension of a perfect field  $K$.
Then  $K$  is Kummer-faithful 
if and only if  $K'$  is Kummer-faithful.
\end{itemize}

Note that 
a perfect field $K$ is  Kummer-faithful 
if and only if $A(K)_{\rm div} =0$ 
for every semi-abelian variety $A$ over $K$ 
(cf.\ \cite[Rem.\ 1.5.2]{Mo}).

It is also shown in \cite[Rem.\ 1.5.4]{Mo} that 
any sub-$p$-adic field is Kummer-faithful. 
Here, a field $k$ is {\it sub-$p$-adic} if there exists a prime number $p$  and 
a finitely generated field extension $L$ of $\mbb{Q}_p$ such that 
$k$ is isomorphic to a subfield of $L$. 
In particular, a number field is Kummer-faithful. 

\begin{proposition} 
\label{KFreduction}
A perfect field 
$K$ is Kummer-faithful if and only if
$\Gm(L)\div=0$  for any finite extension  $L$  of  $K$  and  
$A(K)\div=0$  for any abelian variety  $A$  over  $K$.
\end{proposition}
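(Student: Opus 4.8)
The plan is to establish the nontrivial implication $(\Leftarrow)$; the implication $(\Rightarrow)$ is immediate, since every torus and every abelian variety is a semi-abelian variety, so Kummer-faithfulness specializes directly to the two stated conditions. For the converse, I first record that the hypothesis on $\Gm$ already yields $T(K)\div=0$ for \emph{every} torus $T$ over $K$. Indeed, choose a finite extension $L/K$ splitting $T$, so that $T_L\cong\Gm^d$ and $T(K)\subseteq T(L)\cong(L^\times)^d$. If $P\in T(K)\div$, then for each $n$ a divisor $Q$ with $P=nQ$ may be taken in $T(K)\subseteq T(L)$, whence $P\in T(L)\div=(\Gm(L)\div)^d=0$ by hypothesis.

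Next, by the Remark preceding the Proposition, it suffices to prove $A(K)\div=0$ for every semi-abelian variety $A$ over $K$. Writing the defining sequence $0\to T\to A\xrightarrow{\pi}B\to0$ with $T$ a torus and $B$ an abelian variety over $K$, and setting $C:=\pi(A(K))\subseteq B(K)$, I obtain a short exact sequence of abelian groups $0\to T(K)\to A(K)\to C\to0$. Here $T(K)\div=0$ by the previous paragraph, while $C\div\subseteq B(K)\div=0$ by the abelian-variety hypothesis; moreover $C[n]\subseteq B(K)[n]=B[n](K)$ is \emph{finite} for every $n$, because $B[n]$ is a finite group scheme.

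Everything then rests on the following elementary statement: if $0\to M'\to M\xrightarrow{q}M''\to0$ is exact, $M'\div=M''\div=0$, and $M''[n]$ is finite for all $n$, then $M\div=0$. I would prove it by taking $P\in M\div$; its image in $M''$ lies in $M''\div=0$, so $P\in M'$, and it remains to show $P\in nM'$ for each $n$. Since $P$ is divisible by $nn'$ in $M$ for every $n'$, writing $P=nn'U$ and setting $S=n'U$ gives $nS=P$ with $q(S)=n'q(U)\in n'M''\cap M''[n]$; as any two such $S$ differ by an element of $M[n]$, the class of $q(S)$ in the finite group $M''[n]/q(M[n])$ is well defined and lies in the image of $n'M''\cap M''[n]$ for \emph{all} $n'$. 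By finiteness this downward-directed family stabilizes, and its intersection is $M''\div\cap M''[n]=0$; hence the class vanishes, which says precisely that $P\in nM'$. Letting $n$ vary gives $P\in M'\div=0$. Applying this with $(M',M,M'')=(T(K),A(K),C)$ finishes the proof.

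The one genuine obstacle is exactly this last step. Knowing only that $A(K)\div$ lands in $T(K)$ is \emph{not} enough, because an element of $T(K)$ that is divisible in $A(K)$ need not be divisible in $T(K)$: the obstruction to lifting its divisibility is a torsion class coming from $B$. The finiteness of $B[n](K)$ is what forces this obstruction to disappear, and it is the only point at which the abelian---rather than merely semi-abelian---nature of $B$ is used.
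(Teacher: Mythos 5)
Your proof is correct, and at the decisive step it takes a genuinely different route from the paper's. The overall skeleton is shared: both arguments reduce, via the remark following Definition \ref{Def:KF}, to showing that the divisible part of the $K$-points of a semi-abelian variety over $K$ itself vanishes; both dispose of tori by embedding $T(K)$ into $\Gm(L)^{\oplus d}$ over a finite splitting field $L$; and both then transfer divisibility across the extension of the abelian quotient by the torus, exploiting finiteness of $n$-torsion. But the mechanisms differ. The paper, given a non-zero divisible point $P$, forms the non-empty finite sets $X_n=\{P'\colon nP'=P\}$, invokes Bourbaki's theorem that a projective limit of non-empty finite sets is non-empty to extract one globally \emph{coherent} system of divisors $(P_n)_n$, observes that each image in the abelian quotient is itself divisible and hence zero, and so exhibits $P$ as a non-zero element of $T(K)_{\mathrm{div}}$, a contradiction. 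You instead isolate an abstract lemma about an arbitrary exact sequence $0\to M'\to M\xrightarrow{q} M''\to 0$ with $M'_{\mathrm{div}}=M''_{\mathrm{div}}=0$ and all $M''[n]$ finite, proved $n$ by $n$ via a well-defined obstruction class in $M''[n]/q(M[n])$ together with the stabilization of the downward-directed family of subgroups $n'M''\cap M''[n]$ of the finite group $M''[n]$ at $M''_{\mathrm{div}}\cap M''[n]=0$; these steps all check out (the well-definedness of the class, the directedness, and the passage to the image $C=\pi(A(K))$ needed for exactness are each handled correctly). Your version trades Bourbaki's compactness for an equivalent finite-group stabilization argument and buys a reusable, purely group-theoretic lemma, whereas the paper's coherent-system argument is shorter once the projective-limit fact is granted; the underlying finiteness being used is the same in both. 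One inaccuracy in your closing commentary only: the finiteness of $B[n](K)$ is not where the abelian nature of $B$ enters, since multiplication by $n$ is an isogeny on any semi-abelian variety and its $n$-torsion is finite regardless; what abelian-ness actually supplies is the hypothesis $B(K)_{\mathrm{div}}=0$ itself, which the Proposition asserts only for abelian varieties over $K$ (and, via splitting, for tori).
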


\begin{proof}
It suffices to show the ``if'' direction.
Let $B$ be a semi-abelian variety over $K$; thus 
$B$ is an extension of an abelian variety $A$ 
by a torus $T$. It is enough to show  $B(K)_{\rm {div}}=0$
(see just after Definition \ref{Def:KF}).
Assume that there exists a non-zero divisible element $P\in B(K)_{\rm {div}}$.
For any integer $n>0$, let $X_n$ be the set of $P'\in B(K)$ such that $nP'=P$. 
The set  $X_n$ is a non-empty finite set.
Furthermore,  $\{X_n\}_n$  forms a projective system with transition maps  
$f_{n,m}\colon X_m\to X_n$  given by  $f_{n,m}(P')=(m/n)P'$ for $n\mid m$. 
Then the projective limit  $\plim_{n} X_n$  is non-empty 
by \cite[Chap.\ 1, \S 9.6, Prop.\ 8]{Bo}. 
\if0
We set $X:=\prod_{k\ge 1} X_k$ and, 
for each pair $(n,m)$ of positive integers with $n\mid m$,
 denote by $X_{n,m}$
the subset of $X$ consisting of elements $(x_k)_k$ with the property that 
$f_{n,m}(x_m)=x_n$.
If we equip $X_n$ with a discrete topology 
and also equip $X$ with a direct product topology,
then it follows from Tychonoff's theorem  and the fact that $X_n$ is finite that 
$X$ is compact.
Note that $X_{n,m}$ is a closed subset of $X$ and we have 
$\plim_{n} X_n=\bigcap_{(n,m)} X_{n,m}$
where $(n,m)$ ranges all pairs $(n,m)$ of positive integers such that $n\mid m$.
Hence, by the finite intersection property of compact topological spaces,
it suffices to show that $\bigcap_{(n,m)\in I} X_{n,m}$ is non-empty
for any finite subset $I$ of the set of  pairs $(n,m)$ of positive integers with $n\mid m$.
To show this, we denote by $l$ the least common multiple of all integers $m$ 
with the property that $(n,m)\in I$ for some $n$.
Take any $x_l\in X_l$ and set $x_k:=f_{k,l}(x_l)$ for integers $k$ which divide $l$.
For integers $k$ which does not divide $l$, we choose any $x_k\in X_k$.
Since we have $f_{n,m}(x_m)=f_{n,m}\circ f_{m,l}(x_l)=f_{n,l}(x_l)=x_n$ for $n\mid m$,
we have $(x_k)_k\in \bigcap_{(n,m)\in I} X_{n,m}$.
Thus we obtained the proof of  the claim.
\fi
Take  any element $(P_n)_n\in \plim_n X_n$. 
Then the image  $\bar{P}_n$  of  $P_n$  in  $A(K)$  is divisible.
Since  $A(K)_{\rm div} =0$  by assumption, 
$\bar{P}_n$  is zero for any  $n$.
Therefore,  $P_1$ is a non-zero divisible element of  $T(K)$. 
But this is a contradiction, since  $T(K)\div=0$. 
Indeed, if the torus  $T$  splits by a finite extension  $L/K$, then 
$T(K)$  is identified a subgroup of  $\Gm(L)^{\oplus d}$  with  $d=\dim(T)$ and hence  
$T(K)\div=0$.
\end{proof}

For a semi-abelian variety $A$ over a field $k$ and a prime number $\ell$,
we set $T_{\ell}(A):=\plim_{n} A[\ell^n]$  and 
$V_{\ell}(A):=\mbb{Q}_{\ell}\otimes_{\mbb{Z}_{\ell}} T_{\ell}(A)$, where 
the transition map  
$A[\ell^{n+1}]\to A[\ell^n]$  is given by the multiplication-by-$\ell$.
We call  $T_{\ell}(A)$  and  $V_{\ell}(A)$ 
respectively the {\it $\ell$-adic} and {\it rational $\ell$-adic Tate modules} of $A$. 
It is well-known that $T_{\ell}(A)$ is a free $\mbb{Z}_{\ell}$-module of finite rank 
and  
$V_{\ell}(A)$ is a finite dimensional $\mbb{Q}_{\ell}$-vector space.
The Galois group $G_k$ acts on $T_{\ell}(A)$ and $V_{\ell}(A)$ continuously.

\begin{proposition} 
\label{div}
Let $A$ be a semi-abelian variety over a field $k$ and $K$ an algebraic extension of $k$.

\noindent
{\rm (1)} For any prime number $\ell$, 
the following conditions are equivalent.
\begin{itemize}
\item[{\rm (i)}] $(A(K)[\ell^{\infty}])_{\ell\divi}$ is zero.
\item[{\rm (ii)}] $A(K)[\ell^{\infty}]$ is finite.
\item[{\rm (iii)}] $V_{\ell}(A)^{G_K}=0$.
\end{itemize}

\noindent
{\rm (2)} Consider the following conditions.
\begin{itemize}
\item[{\rm (a)}] $A(K)_{\rm div}$ is zero.
\item[{\rm (b)}] $(A(K)_{\rm tor})_{{\rm div}}$ is zero.
\item[{\rm (c)}] $A(K)[\ell^{\infty}]$ is finite for any prime number $\ell$.
\end{itemize}
Then we have $(a)\Rightarrow (b)\Leftrightarrow (c)$. 
If $k$ is  Kummer-faithful and
the extension $K/k$ is Galois, then 
we have $(a)\Leftrightarrow (b)\Leftrightarrow (c)$.  
\end{proposition}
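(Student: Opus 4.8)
For part (1) the plan is to work entirely with the Tate module. I would identify $A(K)[\ell^{\infty}]$ with $(V_{\ell}(A)/T_{\ell}(A))^{G_K}$ via the standard isomorphism $A[\ell^{\infty}](\overline{k})\cong V_{\ell}(A)/T_{\ell}(A)\cong(\Ql/\Zl)^{d}$, where $d=\dim_{\Ql}V_{\ell}(A)$. As a subgroup of $(\Ql/\Zl)^{d}$ it is a cofinitely generated $\Zl$-module, hence isomorphic to $(\Ql/\Zl)^{r}\oplus F$ with $F$ finite and $0\le r\le d$; its $\ell$-divisible part is precisely the factor $(\Ql/\Zl)^{r}$, so conditions (i) and (ii) both amount to $r=0$. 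For (ii)$\Leftrightarrow$(iii) I would compute the Tate module of this group: since its $\ell^{m}$-torsion is $A[\ell^{m}](K)=(T_{\ell}(A)/\ell^{m})^{G_K}$, passing to the limit gives $(T_{\ell}(A))^{G_K}$, a free $\Zl$-module of rank $r$ whose $\Ql$-span is $V_{\ell}(A)^{G_K}$. Thus $r=\dim_{\Ql}V_{\ell}(A)^{G_K}$, and $r=0\Leftrightarrow V_{\ell}(A)^{G_K}=0$.

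For part (2), the implication $(a)\Rightarrow(b)$ is immediate from $(A(K)\tor)\div\subseteq A(K)\div$. For $(b)\Leftrightarrow(c)$ I would decompose $A(K)\tor=\bigoplus_{\ell}A(K)[\ell^{\infty}]$ and observe that $(A(K)\tor)\div=\bigoplus_{\ell}(A(K)[\ell^{\infty}])_{\ell\divi}$, so that $(b)$ holds if and only if $(A(K)[\ell^{\infty}])_{\ell\divi}=0$ for every $\ell$, which is $(c)$ by the equivalence (i)$\Leftrightarrow$(ii) of part (1).

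The real content is $(c)\Rightarrow(a)$ under the hypotheses that $k$ is Kummer-faithful and $K/k$ is Galois, and this is where I expect the main difficulty: one has to transfer divisibility in the possibly huge field $K$ down to a finite subextension. I would reduce to the claim that, for every intermediate $L$ with $L/k$ finite (so that $L$ is Kummer-faithful and $K/L$ is Galois), any $P\in A(L)$ divisible in $A(K)$ is zero; since each $P\in A(K)\div$ lies in some such $A(L)$, this gives $A(K)\div=0$. To prove the claim, fix $\ell$; because $P$ is $\ell$-divisible in $A(K)$ and each $A[\ell^{m}](K)$ is finite, the compactness argument used in the proof of Proposition \ref{KFreduction} yields a compatible tower $Q_{m}\in A(K)$ with $\ell^{m}Q_{m}=P$ and $\ell Q_{m+1}=Q_{m}$. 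Then $c(g):=(gQ_{m}-Q_{m})_{m}$ is a continuous $1$-cocycle $G_L\to T_{\ell}(A)$ that vanishes on $G_K$. Now $(c)$ enters through part (1): it gives $V_{\ell}(A)^{G_K}=0$, whence $(T_{\ell}(A))^{G_K}=0$, and since $K/L$ is Galois the inflation--restriction sequence
\[
0\to H^{1}(\mathrm{Gal}(K/L),(T_{\ell}(A))^{G_K})\to H^{1}(G_L,T_{\ell}(A))\xrightarrow{\ \mathrm{res}\ }H^{1}(G_K,T_{\ell}(A))
\]
forces $[c]=0$, as its restriction to $G_K$ vanishes and the inflation term is zero. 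Writing $c(g)=gt-t$ with $t=(t_{m})\in T_{\ell}(A)$ and replacing $Q_{m}$ by $Q_{m}-t_{m}$ produces division points of $P$ lying in $A(L)$; hence $P$ is $\ell$-divisible in $A(L)$. Letting $\ell$ vary shows $P\in A(L)\div$, which is $0$ by Kummer-faithfulness of $L$.

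The step I would scrutinize most carefully is the vanishing of the inflation term: it depends on both hypotheses, namely $(T_{\ell}(A))^{G_K}=0$ (from $(c)$ together with part (1)) and the normality of $G_K$ in $G_L$ (from $K/k$, hence $K/L$, being Galois). If either fails, the cocycle $c$ need not be a coboundary, and $P$ could remain non-divisible over every finite subextension even though it is divisible in $A(K)$.
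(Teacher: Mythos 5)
Your proposal is correct, and for the crucial implication it takes a genuinely different route from the paper. Part (1) and the equivalence (b)$\Leftrightarrow$(c) are essentially the paper's arguments in different clothing: the paper's isomorphism $T_{\ell}(A)^{G_K}\simeq \mathrm{Hom}_{\mathbb{Z}}(\mathbb{Z}[1/\ell]/\mathbb{Z},A(K)[\ell^{\infty}])$ is exactly your ``Tate module of $A(K)[\ell^{\infty}]$,'' and where you use the primary decomposition $(A(K)_{\mathrm{tor}})_{\mathrm{div}}=\bigoplus_{\ell}(A(K)[\ell^{\infty}])_{\ell\text{-}\mathrm{div}}$, the paper uses $\prod_{\ell}T_{\ell}(A)^{G_K}\simeq \mathrm{Hom}_{\mathbb{Z}}(\mathbb{Q}/\mathbb{Z},A(K)_{\mathrm{tor}})$ for (b)$\Rightarrow$(c) and an elementary order argument for (c)$\Rightarrow$(b) --- same content. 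For the hard direction, however, the paper proves (b)$\Rightarrow$(a) by contradiction and without any cohomology: from a hypothetical nonzero $P\in A(K)_{\mathrm{div}}$ it extracts (by the same Bourbaki compactness you invoke) a compatible tower $(P_n)$ of division points for \emph{all} $n$ simultaneously, uses Kummer-faithfulness of a finite subextension $k'$ with $P\in A(k')$ to find $P_{n_0}\notin A(k')$, and observes that the Galois twists $Q_n=\sigma_0(P_n)-P_n$ (which lie in $A(K)[n]$ because $K/k$ is Galois) exhibit $Q_{n_0}$ as a nonzero divisible element of $A(K)_{\mathrm{tor}}$, contradicting (b) directly. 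Your $Q_m\mapsto gQ_m-Q_m$ is precisely this twist packaged as the Kummer cocycle, but you instead prove (c)$\Rightarrow$(a) by trivializing the cocycle class via inflation--restriction with $T_{\ell}(A)^{G_K}=0$ and descending divisibility to each finite level $L$, where Kummer-faithfulness finishes. Your version buys the sharper intermediate statement $A(L)\cap A(K)_{\mathrm{div}}\subseteq A(L)_{\mathrm{div}}$ and makes visible exactly where each hypothesis enters; the paper's version avoids continuous cohomology entirely and handles all primes at once, never needing the vanishing of $T_{\ell}(A)^{G_K}$. One small simplification of your own argument: since your cocycle $c$ vanishes identically (not merely cohomologically) on the normal subgroup $G_K$, the relations $c(ng)=nc(g)$ and $c(ng)=c(gn')=c(g)$ for $n\in G_K$ show $c(g)\in T_{\ell}(A)^{G_K}=0$, so $c\equiv 0$ and the $Q_m$ already lie in $A(L)$; the coboundary correction by $t$ is unnecessary, though harmless.
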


\begin{remark}
For the implication (b) $\Rightarrow$ (a), 
the assumption that  $K/k$  be Galois is essential, as the following example shows: 
let  $k=\Q$  and fix an integer  $a>1$. 
Let  $K$  be the extension of  $k$  obtained by adjoining 
the real $n$th roots of  $a$  for all integers  $n\geq 1$. Then  
$\Gm(K)\div\not=0$, whereas  
$(\Gm(K)\tor)\div=0$.
\end{remark}

\begin{proof}
(1) 
The equivalence of (ii) and (iii) follows immediately from the definition of Tate modules.
We also have a natural isomorphism 
$T_{\ell}(A)^{G_K}\simeq \mrm{Hom}_{\mbb{Z}}(\mbb{Z}[1/\ell]/\mbb{Z},A(K)[\ell^{\infty}])$,
which implies the equivalence of (i) and (iii).

\noindent
(2) The implication $(a)\Rightarrow (b)$ is clear.
The implication $(b)\Rightarrow (c)$ follows from the natural isomorphism 
$\prod_{\ell} T_{\ell}(A)^{G_K}\simeq 
\mrm{Hom}_{\mbb{Z}}(\mbb{Q}/\mbb{Z},A(K)_{\rm tor})$, 
where $\ell$ ranges over the prime numbers.
To show $(c)\Rightarrow (b)$, assume that 
a non-zero divisible element $P$ of $A(K)_{\rm tor}$ exists.
Let $N$ be the order of $P$ and take a prime divisor $\ell$ of $N$.
Then we see that 
$(\ell^{-1}N)P$ is a non-zero $\ell$-divisible element of $A(K)[\ell^{\infty}]$.
By (1), we have that $A(K)[\ell^{\infty}]$ is infinite. 
This shows $(c)\Rightarrow (b)$.

Now we show $(b)\Rightarrow (a)$
under the assumption that $k$ is Kummer-faithful and 
 $K/k$ is Galois. 
Assume that there exists a non-zero divisible element $P$ of $A(K)$.
For any integer $n>0$, 
let $X_n$ be the set of $P'\in A(K)$ such that $P=nP'$.
As is explained in the proof of Proposition \ref{KFreduction}, 
we know that  $\{X_n \}_n$  forms a projective system and its projective limit 
$\plim_n X_n$ is non-empty.  
Take an element $(P_n)_n$ of $\plim_{n} X_n$.
Let $k'/k$ be a  finite subextension of $K/k$ such that $P\in A(k')$.
Since $k$ is Kummer-faithful, $k'$ is also Kummer-faithful.
Hence we have $P_{n_0}\notin A(k')$ for some $n_0$.
Take $\sigma_0\in G_{k'}$ such that $\sigma_0(P_{n_0})-P_{n_0}\not=0$ and
set $Q_n:=\sigma_0(P_n)-P_n$ for $n>0$. Then 
$Q_{n_0}\not=0$ and $nQ_{nn_0}=Q_{n_0}$ for any $n$.
Since the extension $K/k$ is Galois, we have 
$Q_n\in A(K)[n]$  for any $n$.
Hence $Q_{n_0}$ is a non-zero divisible element of $A(K)_{\rm tor}$,
which shows $(b)\Rightarrow (a)$.
\end{proof}

\begin{definition} 
\label{Def:HKF}
Let $K$ be a perfect field with characteristic $p_K\ge 0$. 

\noindent
(1) We say that  $K$ is {\it quasi-highly Kummer-faithful} 
if, for every finite extension $L$ of $K$ and 
every proper smooth 
variety $X$ over $L$, it holds that 
$$
(\ast)\quad  H^i_{\rm \acute{e}t}(X_{\overline{K}},\mbb{Q}_{\ell})_{G_L}=0\quad
 \text{for any prime number  $\ell\not= p_K$  and 
any  $0<i\le 2\dim X$}.
$$ 

\noindent
(2)
We say that  $K$ is {\it  highly Kummer-faithful} 
if, for every finite extension $L$ of $K$ and 
every proper smooth 
variety $X$ over $L$, it holds that 
$$
(\sharp)\quad  H^i_{\rm \acute{e}t}(X_{\overline{K}},\mbb{Q}_{\ell}(r))_{G_L}=0\quad 
\text{for any prime number  $\ell\not= p_K$  and  
any  $i, r$  with  $i\not=2r$.}
$$ 
\end{definition}

We have the following implication: 
$$
\mbox{$K$ is highly Kummer-faithful}
\Rightarrow 
\mbox{$K$ is  quasi-highly Kummer-faithful}.
$$

Note that the condition $(\sharp)$ is equivalent to the condition
$$
(\sharp)' \quad  H^i_{\rm \acute{e}t}(X_{\overline{K}},\mbb{Q}_{\ell}(r))^{G_L}=0\quad {\rm where}\
\ell\not= p_K \ {\rm is\ any\ prime\ number\ and\ }i\not=2r.
$$ 
Indeed, we have isomorphisms
$(H^i_{\rm \acute{e}t}(X_{\overline{K}},\mbb{Q}_{\ell}(r))_{G_L})^{\vee}
\simeq 
(H^i_{\rm \acute{e}t}(X_{\overline{K}},\mbb{Q}_{\ell}(r))^{\vee})^{G_L}
\simeq 
H^{2g-i}_{\rm \acute{e}t}(X_{\overline{K}},\mbb{Q}_{\ell}(g-r))^{G_L}$, 
where $g=\dim X$. 

We also note that the  facts below immediately follow from the definition of
high Kummer-faithfulness (resp.\ quasi-high Kummer-faithfulness).

\begin{itemize}
\item[(1)] If $K$ is highly Kummer-faithful (resp.\ quasi-highly Kummer-faithful),
then so is any subfield of $K$.
\item[(2)] Let $K'$ be a finite extension of a perfect field $K$.
Then $K$ is highly Kummer-faithful (resp.\ quasi-highly Kummer-faithful) 
if and only if 
$K'$ is highly Kummer-faithful (resp.\ quasi-highly Kummer-faithful).
\end{itemize}

\begin{remark} 
In view of the fact that Kummer theory deals with 
abelian extensions of degree prime to the characteristic of the base field, 
we impose the condition  $\ell\not= p_K$  in the above definition of 
(quasi-)high Kummer-faithfulness, 
though there may be room for discussion whether we 
should really do so.
\end{remark}

\begin{remark} 
We should note that 
Kummer-faithfulness and (quasi-)high Kummer-faithfulness are
``local notion'' in coefficients. 
In fact, it is not difficult to check that 
a perfect field $K$ is Kummer-faithful
if and only if $K$ is {\it $\ell$-Kummer-faithful} for any prime number $\ell$. 
Here,
$K$ is $\ell$-Kummer-faithful
if, for every finite extension $L$ of $K$ and every semi-abelian variety 
$A$ over $L$, it holds that $A(L)_{\ell\divi} =0.$
If we define the notion of {\it high $\ell$-Kummer-faithfulness} 
by the obvious manner, then $K$ is highly Kummer-faithful if and only if
$K$ is  highly $\ell$-Kummer-faithful for any prime number $\ell$.
\end{remark}

\begin{proposition} 
\label{KFHKF}
Let $K$ be a Galois extension of a Kummer-faithful field. 
If $K$ is quasi-highly Kummer-faithful,
then $K$ is Kummer-faithful.
\end{proposition}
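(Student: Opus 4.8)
The plan is to deduce Kummer-faithfulness of $K$ from Proposition \ref{KFreduction}: since $K$ is perfect (being algebraic over a perfect field), it suffices to prove that $\Gm(L)\div=0$ for every finite extension $L$ of $K$ and that $A(K)\div=0$ for every abelian variety $A$ over $K$. In both cases I would follow the same two-step strategy: first use quasi-high Kummer-faithfulness to force finiteness of the $\ell$-power torsion for each prime $\ell\neq p_K$, and then upgrade this finiteness to the vanishing of divisible elements by means of Proposition \ref{div}, whose implication $(c)\Rightarrow(a)$ is precisely where the hypotheses that the base field be Kummer-faithful and that the extension be Galois are consumed.

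For an abelian variety $A$ over $K$, I would apply condition $(\ast)$ to $X=A$ over $L=K$ in degree $i=1$, which gives $H^1_{\rm \acute{e}t}(A_{\overline K},\Ql)_{\GK}=0$ for every $\ell\neq p_K$. Combining the canonical isomorphism $H^1_{\rm \acute{e}t}(A_{\overline K},\Ql)\simeq V_\ell(A)^\vee$ with the elementary duality $(W_G)^\vee\simeq (W^\vee)^G$ for finite-dimensional $G$-representations $W$, this yields $V_\ell(A)^{\GK}=0$. By Proposition \ref{div}(1) the group $A(K)[\ell^\infty]$ is then finite for all $\ell\neq p_K$, and since $k$ is Kummer-faithful and $K/k$ is Galois, Proposition \ref{div}(2) gives $A(K)\div=0$.

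The torus case is where the main obstacles lie. Here $\Gm$ is not proper, so it cannot be fed into $(\ast)$ directly; and, more seriously, even after establishing torsion-finiteness I cannot invoke Proposition \ref{div}(2) over $L$, because $L/k$ need not be Galois although $K/k$ is. I would fix the latter issue by passing to the Galois closure $L''$ of $L$ over $k$: because $K/k$ is Galois and $L/K$ is finite, $L''$ is a finite extension of $K$ (hence again quasi-highly Kummer-faithful) and $L''/k$ is Galois, and moreover $\Gm(L)\div\subseteq\Gm(L'')\div$, so it is enough to prove $\Gm(L'')\div=0$. The non-properness I would circumvent using $X=\mathbb{P}^1$ over $L''$, for which $H^2_{\rm \acute{e}t}(\mathbb{P}^1_{\overline K},\Ql)\simeq\Ql(-1)\simeq V_\ell(\Gm)^\vee$; since $0<2\le 2\dim\mathbb{P}^1$, condition $(\ast)$ together with the duality above forces $V_\ell(\Gm)^{G_{L''}}=\Ql(1)^{G_{L''}}=0$, i.e.\ $\mulinf\not\subseteq L''$, equivalently (Proposition \ref{div}(1)) $\Gm(L'')[\ell^\infty]$ is finite. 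This holds for every $\ell\neq p_K$, so Proposition \ref{div}(2) yields $\Gm(L'')\div=0$, whence $\Gm(L)\div=0$.

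Putting the two cases together, Proposition \ref{KFreduction} shows that $K$ is Kummer-faithful. I expect the torus step to be the crux: the twin maneuvers of replacing $\Gm$ by $\mathbb{P}^1$ (to reach $\Ql(1)$ through a \emph{proper} variety) and of replacing $L$ by its Galois closure $L''$ over $k$ (to make Proposition \ref{div}(2) applicable) are what make the argument go through. One point deserving care is the prime $p_K$: the definitions constrain only $\ell\neq p_K$, so in positive characteristic the $p_K$-primary torsion is not controlled by $(\ast)$ and the above gives Kummer-faithfulness away from $p_K$, while in characteristic $0$ this subtlety is absent and the full conclusion follows.
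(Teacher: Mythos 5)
Your proposal is correct and takes essentially the same route as the paper's own (very terse) proof, which consists exactly of the two $G_L$-equivariant isomorphisms $\Ql(-1)\simeq H^2_{\rm \acute{e}t}(\mbb{P}^1_{\overline{K}},\Ql)$ and $V_\ell(A)\simeq H^1_{\rm \acute{e}t}(A_{\overline{K}},\Ql)^\vee$ followed by an appeal to Propositions \ref{div} and \ref{KFreduction}. The details you add beyond the paper --- the passage to the Galois closure $L''$ of $L/k$ (finite over $K$ because $K/k$ is normal) so that Proposition \ref{div}(2) applies to the torus case, and the observation that $(\ast)$ says nothing about $\ell=p_K$ so that in positive characteristic the $p_K$-primary torsion of abelian varieties is not controlled --- are points the paper leaves implicit or silently unaddressed, and you handle them correctly.
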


\begin{proof}
Note that,
for any finite extension $L$ of $K$, 
any prime number $\ell$ and any $g$-dimensional abelian variety $A$ over $L$, 
there exist 
$G_L$-equivalent isomorphisms  
$\mbb{Q}_{\ell}(-1)\simeq H^2_{\rm \acute{e}t}(\mbb{P}^1_{\overline{K}},\mbb{Q}_{\ell})$
and 
$V_{\ell}(A)\simeq (H^1_{\rm \acute{e}t}(A_{\overline{K}},\mbb{Q}_{\ell}))^\vee$.
Hence the result follows from Propositions \ref{div} and \ref{KFreduction}.
\end{proof}

\begin{proposition} 
\label{subptorfin}
If $A$ is a semi-abelian variety over a sub $p$-adic field $K$,
then $A(K)_{\mrm{tor}}$ is finite.
\end{proposition}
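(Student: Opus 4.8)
The plan is to reduce to two basic cases — split tori and abelian varieties — and, for the abelian variety case, to pass from a finitely generated extension of $\Qp$ to the classical situation of a $p$-adic field by a specialization argument.

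First I would fix an embedding $K\hookrightarrow L$ into a finitely generated extension $L$ of $\Qp$, which exists by the definition of sub-$p$-adic field. Since $A\times_K L$ is again a semi-abelian variety over $L$ and $A(K)\tor$ injects into $(A\times_K L)(L)\tor$, it suffices to treat the case $K=L$. Writing $A$ as an extension $0\to T\to A\to B\to 0$ of an abelian variety $B$ by a torus $T$, the left-exact sequence $0\to T(L)\tor\to A(L)\tor\to B(L)\tor$ shows that it is enough to prove that $T(L)\tor$ and $B(L)\tor$ are both finite.

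For the torus $T$ I would pass to a finite extension $L'/L$ splitting $T$; then $L'$ is again finitely generated over $\Qp$, and $T(L)\tor$ embeds into a finite sum of copies of $\Gm(L')\tor$, i.e.\ into the group of roots of unity of $L'$. Every root of unity is algebraic over $\Qp$, hence lies in the relative algebraic closure $L'_0$ of $\Qp$ in $L'$; since $L'$ is finitely generated over $\Qp$, the field $L'_0$ is a finite extension of $\Qp$, that is, a $p$-adic field, and therefore contains only finitely many roots of unity. This settles the torus case.

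For the abelian variety $B$ I would spread $B$ out to an abelian scheme $\mathcal{B}\to U$ over a nonempty open subscheme $U$ of a normal integral model of $L$ over $\Qp$. Because $\Qp$ has characteristic $0$, every $\mathcal{B}[N]\to U$ is finite étale, so an $N$-torsion point of $B(L)$ — a section over the generic point — extends uniquely to a section of $\mathcal{B}[N]$ over the normal base $U$, and two distinct such sections remain distinct on every fibre, since the equalizer of two sections of a separated unramified morphism over the connected base $U$ is open and closed. Choosing a closed point $s\in U$, whose residue field $k(s)$ is a $p$-adic field, I thereby obtain an injection $B(L)\tor\hookrightarrow\mathcal{B}_s(k(s))\tor$. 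The hard part is then precisely the input at the special fibre: one needs $\mathcal{B}_s(k(s))\tor$ to be finite, which is the classical theorem of Mattuck that the group of points of an abelian variety over a $p$-adic field is the product of a finite group and a finitely generated free $\Zp$-module. Granting this, $B(L)\tor$ is finite, and together with the torus case the proposition follows.
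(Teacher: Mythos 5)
Your proof is correct, and while its overall skeleton matches the paper's (reduce to a finitely generated extension $L$ of $\Qp$, split off the torus, and end at Mattuck's theorem for abelian varieties over $p$-adic fields), the abelian-variety step takes a genuinely different route. The paper passes from $L$ (finite over $K_0=\Qp(t_1,\dots ,t_r)$) down to $\Qp$ algebraically: it replaces $A$ by the Weil restriction $A_0=\mrm{Res}_{L/K_0}(A)$, so that $A_0(K_0)=A(L)$, and invokes the Lang--N\'eron theorem to produce an abelian variety $A'_0$ over $\Qp$ with $A_0(K_0)/\iota(A'_0(\Qp))$ finitely generated, after which finiteness of torsion drops to Mattuck. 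You pass down geometrically: you spread $B$ out to an abelian scheme $\mathcal{B}\to U$ over a normal integral $\Qp$-variety with function field $L$ and specialize at a closed point $s$, whose residue field is a $p$-adic field by the Nullstellensatz; the key points --- each $\mathcal{B}[N]\to U$ is finite \'etale since all residue characteristics are $0$, torsion sections extend uniquely over the normal base $U$, and specialization is injective on torsion by your open-and-closed equalizer argument on the connected base --- are all sound. Your route trades the deep Lang--N\'eron input for standard spreading-out and specialization techniques and controls exactly what is needed, namely torsion, whereas the paper's route is shorter on the page but imports a stronger structural statement about the full Mordell--Weil group modulo the $K_0/\Qp$-trace. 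A minor further difference: the paper simply asserts that torus torsion is finite, whereas you supply the argument (roots of unity of a splitting field $L'$ lie in the relative algebraic closure of $\Qp$ in $L'$, which is a $p$-adic field since any subextension of a finitely generated field extension is finitely generated, and a $p$-adic field contains only finitely many roots of unity).
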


\begin{proof}
We may assume that $K$ is a finite extension of $K_0=\mbb{Q}_p(t_1,\dots ,t_r)$
where $t_1,..,t_r$ are transcendental basis of $K$ over $\mbb{Q}_p$. 
The torsion part of $K$-rational points of 
any torus over $K$ is finite.
Thus it suffices to prove the proposition in the case where $A$ is an abelian variety.
Let $A_0:=\mrm{Res}_{K/K_0}(A)$ be the Weil restriction.
Then $A_0$ is an abelian variety over $K_0$ and  we have $A_0(K_0)=A(K)$.
By the Lang-N\'eron theorem (\cite{Co}, \cite{LN}), 
there exist an abelian variety $A'_0$ over $\mbb{Q}_p$ and 
an injection 
$\iota\colon A'_0(\mbb{Q}_p)\hookrightarrow A_0(K_0)$
with the property that 
$A_0(K_0)/\iota (A'_0(\mbb{Q}_p))$ is a finitely generated abelian group.
Since $A'_0(\mbb{Q}_p)_{\mrm{tor}}$ is finite (cf. \cite{Ma}),
we obtain that $A_0(K_0)_{\mrm{tor}}=A(K)_{\mrm{tor}}$ is finite.
\end{proof}

\bn
{\bf Summary.} 
Let $K$ be a Galois extension of a Kummer-faithful  field.
For such $K$, we consider the following conditions: 

\begin{description}
\item[(Sub${}_p$)]  $K$ is sub $p$-adic.
\item[(Tor)${}_{\mrm{fin.}}$]  For any finite extension $L$ of $K$ and any semi-abelian variety $A$ over $L$,
it holds that $A(L)_{\mrm{tor}}$ is finite.
\item[(Tor)${}_{\mrm{loc.fin.}}$] For any finite extension $L$ of $K$, any semi-abelian variety $A$ over $L$
and any prime number $\ell$,
it holds that $A(L)[\ell^{\infty}]$ is finite.
\item[(KF)] $K$ is Kummer-faithful. 
\item[(QHKF)] $K$ is quasi-highly Kummer-faithful. 
\item[(HKF)] $K$ is highly Kummer-faithful. 
\end{description}

\noindent
Then we have the following diagram of logical relations:
$$
\displaystyle \xymatrix{
& {\bf (KF)}
\ar@{<=>}^{{\rm Prop. \ref{div}}\qquad }[rr] 
& &  
{\bf (Tor){}_{\mrm{loc.fin.}}}
\\
{\bf (HKF)} \ar@{=>}[r]
&
{\bf (QHKF)} \ar@{=>}^{{\rm Prop. \ref{KFHKF}}}[u]
& 
({\bf Sub}{}_p) \ar@{=>}_{\cite{Mo}}[lu] \ar@{=>}^{{\rm Prop. \ref{subptorfin}}}[r]
& 
{\bf (Tor){}_{\mrm{fin.}}}  \ar@{=>}_{{\rm trivial}}[u]
}
$$

It seems  natural  to consider the relationship between sub $p$-adic fields and 
(quasi-)highly Kummer-faithful fields.
First, we note that  {\it any sub $p$-adic field is  not highly Kummer faithful}. 
To see this, it is enough to show that $\mbb{Q}_p$ is not highly  Kummer faithful.
Let $E$ be a Tate curve over $\mbb{Q}_p$.
Since we have a natural exact sequence $0\to \mbb{Q}_p(1)\to V_p(E)\to \mbb{Q}_p\to 0$
of representations of $G_{\mbb{Q}_p}$,
we obtain
$(H^1_{\rm \acute{e}t}(E_{\overline{\mbb{Q}}_p},\mbb{Q}_p(1))_{G_{\mbb{Q}_p}})^{\vee}
\simeq 
(H^1_{\rm \acute{e}t}(E_{\overline{\mbb{Q}}_p},\mbb{Q}_p(1))^{\vee})^{G_{\mbb{Q}_p}}
\simeq 
V_p(E)(-1)^{G_{\mbb{Q}_p}}\not=0$.
Thus we are done. 
Now we can ask the following weaker question:

\begin{question}
Are sub $p$-adic fields quasi-highly Kummer-faithful?
\end{question}

Under the ``$\ell$-adic'' and ``$p$-adic'' monodromy weight conjectures 
(Conj.\ 4.1 and 5.1 of \cite{Ja}),
we can show the following.

\begin{proposition}
Assume the monodromy weight conjectures. 
Then any $p$-adic field is quasi-highly Kummer-faithful. 
\end{proposition}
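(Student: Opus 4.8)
The plan is to reduce the statement, for every finite extension $L/K$, every proper smooth $X/L$ of dimension $d$, every prime $\ell$, and every $i$ with $0<i\le 2d$, to the vanishing of the maximal trivial quotient of the $G_L$-representation $V:=H^i_{\rm \acute{e}t}(X_{\overline{K}},\mbb{Q}_{\ell})$; this is precisely the assertion $V_{G_L}=0$ required in $(\ast)$. Since a $p$-adic field has characteristic $0$, the condition $\ell\ne p_K$ imposes nothing, so both $\ell\ne p$ (the residue characteristic) and $\ell=p$ must be handled. The idea is to pass to the associated Weil--Deligne representation and to exploit the monodromy operator through the weight--monodromy conjecture.

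First I would attach to $V$ a Weil--Deligne representation with Frobenius $\varphi$ and monodromy operator $N$, together with Deligne's weight filtration $W_\bullet$, indexed so that $\mathrm{gr}^W_w$ is pure of weight $w$. For $\ell\ne p$ this is Grothendieck's $\ell$-adic monodromy construction, using that inertia acts quasi-unipotently; for $\ell=p$ one invokes the $p$-adic monodromy theorem to see that $V$ is potentially semistable and passes to $D_{\mathrm{pst}}(V)$, which carries $\varphi$ and $N$. In both cases $W_\bullet$ is defined by the Frobenius weights, hence is functorial: every morphism of representations is strict for $W_\bullet$ and induces the zero map on $\mathrm{gr}^W_w$ whenever source and target do not share the weight $w$.

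Next, let $q\colon V\twoheadrightarrow Q$ be a trivial quotient. As $G_L$ acts trivially on $Q$, the monodromy $N_Q$ vanishes and $\varphi$ acts as the identity, so $Q$ is pure of weight $0$; moreover $q\circ N=N_Q\circ q=0$, i.e.\ $q$ annihilates $\Im(N)$. By functoriality $q$ induces $0$ on $\mathrm{gr}^W_w$ for every $w\ne 0$, so it remains to treat $\mathrm{gr}^W_0$. Here the weight--monodromy conjecture enters: it provides the isomorphisms $N^{k}\colon\mathrm{gr}^W_{i+k}\xrightarrow{\sim}\mathrm{gr}^W_{i-k}$, and because $i>0$ the single step $N\colon\mathrm{gr}^W_{2}\to\mathrm{gr}^W_{0}$ is already surjective (by the underlying $\mathfrak{sl}_2$-theory the lowering operator hits the bottom weight space). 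Thus $\mathrm{gr}^W_0=W_0\subseteq\Im(N)$, so $q$ kills $\mathrm{gr}^W_0$ as well. A morphism of the (bounded, exhaustive, separated) filtered object $V$ that vanishes on every $\mathrm{gr}^W_w$ is itself zero, whence $q=0$ and $V_{G_L}=0$, as desired.

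The main obstacle I anticipate is the case $\ell=p$: one must check that the passage to $D_{\mathrm{pst}}$ is compatible with the formation of (co)invariants, so that a nonzero trivial quotient of the $p$-adic representation $V$ genuinely yields a nonzero weight-$0$, $N=0$ quotient of the $(\varphi,N)$-module to which the $p$-adic weight--monodromy conjecture (Conj.\ 5.1 of \cite{Ja}) applies. For $\ell\ne p$ the analogous dictionary between $G_L$-invariants/coinvariants and the $(\varphi,N)$-structure of the Weil--Deligne representation is standard, and the $\ell$-adic conjecture (Conj.\ 4.1 of \cite{Ja}) supplies exactly the surjectivity of $N$ onto the weight-$0$ part that the argument uses.
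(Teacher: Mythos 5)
Your argument is essentially correct, but it takes a genuinely more hands-on route than the paper. The paper's proof is a two-line reduction plus citation: by Poincar\'e duality, $(H^i_{\rm \acute{e}t}(X_{\overline{K}},\mbb{Q}_{\ell})_{G_L})^{\vee}\simeq H^{2g-i}_{\rm \acute{e}t}(X_{\overline{K}},\mbb{Q}_{\ell}(g))^{G_L}$ with $g=\dim X$, and since $g\notin[\mrm{max}\{0,g-i\},(2g-i)/2]$ precisely when $i>0$, the vanishing is read off from Corollaries 4.3 and 5.2 of \cite{Ja}, which (under the two monodromy weight conjectures) bound the Tate twists for which Galois invariants can be nonzero, uniformly for $\ell\not=p$ and $\ell=p$. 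Your proof instead works directly with coinvariants and re-derives, in the special case needed, exactly the mechanism behind those corollaries: a trivial quotient is pure of weight $0$ with $N=0$, while for $i>0$ the $\mathfrak{sl}_2$-structure of the monodromy filtration centered at $i$ (which is the weight filtration under the conjecture) makes $N\colon\mathrm{gr}^W_2\to\mathrm{gr}^W_0$ surjective, so the weight-$0$ part dies in any $N$-annihilating quotient; this is sound, and it makes the role of $N$ transparent without dualizing. What the paper's citation buys is that it absorbs the two points you leave slightly loose: (i) your closing principle that a filtered map vanishing on all $\mathrm{gr}^W_w$ is zero is false for arbitrary filtered maps and must be run through the strictness (equivalently, the canonical splitting by Frobenius generalized eigenspaces) that you mention earlier, and your identification $\mathrm{gr}^W_0=W_0$ silently uses the known bound that $H^i$ has weights $\ge 0$ (equivalently $N^{i+1}=0$) --- both fixable, e.g.\ by a downward induction showing $q(W_w)=0$; and (ii) the $\ell=p$ compatibility you flag but do not resolve (that $D_{\mrm{pst}}$ is exact and rank-preserving on potentially semistable representations, so a nonzero trivial quotient genuinely yields a nonzero weight-$0$, $N=0$ quotient $(\varphi,N)$-module) is standard but does need saying; Corollary 5.2 of \cite{Ja} packages exactly this for you.
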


\begin{proof}
Let $K$ be a $p$-adic field, $L$ a finite extension of $K$ 
and $X$ a proper smooth variety over $L$ of dimension $g$.
Let $\ell$ be any prime number (including the case $\ell=p$).
Since we have  
$(H^i_{\rm \acute{e}t}(X_{\overline{K}},\mbb{Q}_{\ell})_{G_L})^{\vee}
\simeq 
(H^i_{\rm \acute{e}t}(X_{\overline{K}},\mbb{Q}_{\ell})^{\vee})^{G_L}
\simeq 
H^{2g-i}_{\rm \acute{e}t}(X_{\overline{K}},\mbb{Q}_{\ell}(g))^{G_L}$,
 for our purpose,
it suffices to check the vanishing of 
$H^{2g-i}_{\rm \acute{e}t}(X_{\overline{K}},\mbb{Q}_{\ell}(g))^{G_L}$. 
Since we have  $g\notin [\mrm{max}\{0,g-i\}, (2g-i)/2]$ for $i>0$,
it follows from Corollaries 4.3 and 5.2 of \cite{Ja} 
(under the assumption of the monodromy weight conjectures)  
that the above subspace is zero for any prime number $\ell$.  
\end{proof}

\subsection{Criteria for high Kummer-faithfulness}

We give some criteria for high Kummer-faithfulness in terms of ramification theory. 

\begin{theorem} 
\label{Main:HKF}
Let $K$ be a Galois extension of a number field $k$. 
Assume that, for any finite extension $k'/k$,
the ramification index of the maximal abelian subextension of $Kk'/k'$
at any finite place of $k'$ is finite. 
Then $K$ is highly Kummer-faithful.

In particular,  any number field is highly Kummer faithful.
\end{theorem}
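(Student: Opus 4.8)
The plan is to reduce the vanishing of the coinvariants to a statement about a single abelian character, and then to contradict the ramification hypothesis. Fix a prime $\ell$; since $k$ is a number field, $p_K=0$ and $\ell$ is arbitrary. By the equivalence of $(\sharp)$ and $(\sharp)'$ recorded above, it suffices to prove
\[
V^{G_L}=0,\qquad\text{where}\quad V:=H^i_{\rm \acute{e}t}(X_{\overline K},\Ql(r)),
\]
whenever $i\neq 2r$. First I would descend to finite level. Since $K/k$ is algebraic, so is $L/k$, whence $\overline K=\overline k$ and $X$ is defined over a finite subextension of $L/k$; so I may pick a finite extension $L_0/k$ with $L_0\subseteq L$ and a proper smooth $X_0/L_0$ with $X=X_0\times_{L_0}L$. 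Writing $L=K(\alpha_1,\dots,\alpha_s)$ and enlarging $L_0$ to contain the $\alpha_j$, I may assume $L=KL_0$. Since $K/k$ is Galois, $L/L_0=KL_0/L_0$ is Galois, and $V=H^i_{\rm \acute{e}t}(X_{0,\overline k},\Ql(r))$ is a $G_{L_0}$-representation on which $G_L$ acts by restriction.

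Next I would invoke the weight input. By the Weil conjectures (Deligne), $V$ is pure of weight $w:=i-2r$, which is nonzero by hypothesis: for almost all finite places $v$ of $L_0$, the geometric Frobenius $\mrm{Frob}_v$ acts on $V$ with eigenvalues that are $q_v$-Weil numbers of weight $w$. Suppose, for contradiction, that $W:=V^{G_L}\neq 0$. Because $L/L_0$ is Galois, $G_L$ is normal in $G_{L_0}$, so $W$ is a $G_{L_0}$-stable subspace; being a subrepresentation of a pure representation, $W$ is itself pure of weight $w$. Since $G_L$ acts trivially on $W$, the $G_{L_0}$-action on $W$ factors through $\mrm{Gal}(L/L_0)$.

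The key move is to pass to the determinant. Set $\psi:=\det_{\Ql}W\colon G_{L_0}\to\overline{\Q}_\ell^\times$. As a character it factors through the maximal abelian quotient of $\mrm{Gal}(L/L_0)$, i.e.\ through $\mrm{Gal}(M/L_0)$, where $M$ is the maximal abelian subextension of $L/L_0=KL_0/L_0$. Applying the hypothesis with $k'=L_0$ (so $Kk'=L$), the ramification index of $M/L_0$ at every finite place of $L_0$ is finite; hence $\psi(I_v)$ is finite for every inertia group $I_v\subseteq G_{L_0}$ at a finite place. On the other hand $\psi$ is pure of weight $w\cdot\dim W\neq 0$, and, being the determinant of a $G_{L_0}$-subrepresentation of the de Rham representation $H^i_{\rm \acute{e}t}(X_{0,\overline k},\Ql(r))$, it is a de Rham (hence locally algebraic) abelian character, so it is the $\ell$-adic realization of an algebraic Hecke character of $L_0$. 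For such a character, purity of nonzero weight forces a nonzero infinity type, and therefore a nonzero Hodge--Tate weight at some place $v_0\mid\ell$; consequently $\psi(I_{v_0})$ is infinite. This contradicts the finiteness just obtained, so $W=0$, which proves the theorem. The final assertion is then immediate: for $K=k$ one has $Kk'=k'$ for every finite $k'/k$, so the hypothesis holds with all relevant ramification indices equal to $1$.

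The step I expect to be the main obstacle is the last implication of the third paragraph, namely that an abelian $\ell$-adic Galois character of nonzero (Frobenius) weight must be infinitely ramified above $\ell$. This is where the archimedean weight must be converted into $\ell$-adic ramification: one uses that the character is de Rham at $\ell$, that a de Rham abelian character is locally algebraic and hence arises from an algebraic Hecke character (Serre, Henniart), and that for a Hecke character pure of weight $w'\neq0$ the infinity type $(n_\sigma)$ satisfies $n_\sigma+n_{\bar\sigma}=w'$ and so cannot vanish identically, yielding a nonzero Hodge--Tate weight and thus infinite inertia image. It is precisely the passage to $\psi=\det W$ that makes this abelian theory available, and that lets the hypothesis---which only controls the \emph{abelian} part of $KL_0/L_0$---bite.
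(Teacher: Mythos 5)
Your proof is correct, and up to the decisive step it coincides with the paper's: both reduce via Poincar\'e duality to showing $V^{G_L}=0$, descend to a finite subextension $L_0\subset L$ of $k$ with $L=KL_0$ Galois over $L_0$ and $X$ defined over $L_0$, observe that $W:=V^{G_L}$ is a $G_{L_0}$-stable subspace pure of nonzero Weil weight $i-2r$ at the good-reduction places (Deligne), and pass to the abelian character $\psi=\det W$, which factors through the maximal abelian subextension of $KL_0/L_0$, so that the hypothesis (applied with $k'=L_0$) makes all its inertia images finite. You then diverge from the paper at the endgame. The paper's Lemma \ref{Main} stays entirely within class field theory: a continuous $\ell$-adic character of $G_{L_0}$ is automatically unramified at almost all places and has finite inertia image at every finite place not above $\ell$, so the hypothesis (which the paper therefore needs only at places above $\ell$) forces the splitting field $L_0(\psi)$ to be an abelian extension with finite inertia everywhere, hence a \emph{finite} extension of $L_0$; thus $\psi$ has finite image, which contradicts purity directly, since $\psi(\mrm{Frob}_v)$ would be a root of unity yet must have all archimedean absolute values equal to $q_v^{(i-2r)\dim W/2}\neq 1$ at a good place $v\nmid\ell$. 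You instead convert the archimedean weight into ramification above $\ell$: $\psi$ is de Rham (this uses the deep theorem of Faltings/Tsuji that the \'etale cohomology of a proper smooth variety is de Rham), hence locally algebraic (Serre--Tate, Henniart) and attached to an algebraic Hecke character, whose purity in nonzero weight forces a nonzero infinity type, hence a nonzero Hodge--Tate weight at some $v_0\mid\ell$ and an infinite inertia image there, contradicting the hypothesis at $v_0$. Both arguments are valid; the trade-off is that the paper's route is far more elementary and yields a more general lemma (it applies to \emph{any} $\ell$-adic representation with a nonzero Weil weight at a single place away from $\ell$, with no geometric or de Rham hypothesis), while yours imports heavy $p$-adic Hodge-theoretic machinery but localizes the contradiction at one place above $\ell$ and proves the sharper fact that such a pure abelian determinant must be infinitely ramified above $\ell$. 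Two small caveats in your write-up: Serre's correspondence also requires unramifiedness outside a finite set, which is automatic here (from good reduction, or from the structure of continuous $\ell$-adic characters); and once you know $\psi(I_v)$ is finite at \emph{every} finite place, class field theory already gives finiteness of the global image with no Hodge theory at all --- which is exactly the paper's shortcut.
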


The theorem is a consequence 
of a general property of Galois representations (Lemma \ref{Main} below).
To state it, we recall the notion of Weil weights.

\begin{definition}
Let $k$ be a number field, $v$ a finite place of $k$ not above $\ell$, 
$V$ an $\ell$-adic representation of $G_k$
and $w$ a real number.
We say that $V$ has (pure) {\it Weil weight $w$ at $v$} if
\begin{itemize}
\item[(a)] $V$ is unramified at $v$, 
\item[(b)] coefficients of the characteristic polynomial 
$\mrm{det}(T-\mrm{Frob}_v \mid V)$ are algebraic numbers, and 
\item[(c)] for any root $\alpha$ of $\mrm{det}(T-\mrm{Frob}_v \mid V)$ and any embedding 
$\iota\colon \overline{\mbb{Q}}\hookrightarrow \mbb{C}$,
it holds that $|\iota (\alpha)|=q_v^{w/2}$. 
Here, $q_v$ is the order of the residue field of $k$ at $v$.
\end{itemize}
\end{definition}

For a proper smooth 
variety $X$ over a number field $k$ which has good reduction at a finite place $v$ of $k$, 
the Weil conjecture proved by Deligne \cite{De1, De2} implies that 
$
H^i_{\rm \acute{e}t}(X_{\overline{k}},\mbb{Q}_{\ell}(r))
$ 
has  Weil weight $i-2r$ at $v$. 

\begin{lemma} 
\label{Main}
Let $K$ be a Galois extension of a number field $k$ 
and $\ell$ a prime number.
Let $M/k$ be the maximal abelian subextension of $K/k$.  
Assume that the ramification index of the extension $M/k$ 
at any finite place of $k$ above $\ell$ is finite. 
Then, for any  $\mbb{Q}_{\ell}$-representation $V$ of $G_k$
with non-zero Weil weight at some finite place of $k$ not above $\ell$, we have $V^{G_K}=0$.
\end{lemma}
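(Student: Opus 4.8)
The plan is to reduce the assertion to a statement about a single abelian Galois character and then to settle that case by global class field theory, the finiteness of the ramification at $\ell$ being the one essential input.

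First I would replace $V$ by $W:=V^{G_K}$. Since $K/k$ is Galois, $G_K$ is normal in $G_k$, so $W$ is a $G_k$-subrepresentation of $V$: for $g\in G_k$, $w\in W$ and $h\in G_K$ one has $h(gw)=g\big((g^{-1}hg)w\big)=gw$. By construction $G_K$ acts trivially on $W$, so the $G_k$-action on $W$ factors through $\mrm{Gal}(K/k)$. The representation $W$ still has Weil weight $w$ at $v$: it is unramified at $v$ because $I_v$ acts trivially on $V$, and the characteristic polynomial of $\mrm{Frob}_v$ on the stable subspace $W$ divides that on $V$, so its roots form a sub-multiset of the roots for $V$ and are thus algebraic numbers of absolute value $q_v^{w/2}$ under every embedding $\iota$. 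Assume, for contradiction, that $W\neq 0$, and put $n=\dim_{\Ql}W\ge 1$. Then $\chi:=\det W\colon G_k\to\Ql^\times$ factors through $\mrm{Gal}(K/k)$, and being a character it factors through the maximal abelian quotient $\mrm{Gal}(M/k)$. It is unramified at $v$, and $\alpha:=\chi(\mrm{Frob}_v)=\det(\mrm{Frob}_v\mid W)$ is an algebraic number with $|\iota(\alpha)|=q_v^{nw/2}\neq 1$ for every $\iota\colon\overline{\Q}\hookrightarrow\mbb{C}$; in particular $\alpha$ is not a root of unity. It therefore suffices to prove that $\chi$ has finite image.

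Next I would translate this into class field theory. The hypothesis says the image of inertia $I_u$ in $\mrm{Gal}(M/k)$ is finite for every place $u\mid\ell$, so $\chi|_{I_u}$ has finite image for all $u\mid\ell$. Let $\psi:=\chi\circ\mrm{rec}_k\colon C_k\to\Ql^\times$ be the corresponding character of the idele class group $C_k=\mbb{A}_k^\times/k^\times$; as $G_k$ is compact, $\psi$ takes values in $\Zl^\times$. Under local reciprocity $\mcal{O}_{k_u}^\times$ maps onto the image of inertia at $u$, so $\psi(\mcal{O}_{k_u}^\times)$ is finite for $u\mid\ell$ by the above. For a finite place $u\nmid\ell$ it is finite automatically, because $\mcal{O}_{k_u}^\times$ is the product of a finite group and a pro-$p_u$ group with $p_u\neq\ell$, and a continuous image of such a group in $\Zl^\times$ is necessarily finite.

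To finish, I would project $\Zl^\times$ onto its maximal torsion-free quotient $\cong\Zl$ and let $\rho\colon C_k\to\Zl$ be the composite with $\psi$. Since $\Zl$ is totally disconnected, $\rho$ kills the image of the connected group $(k_\infty^\times)^\circ$; since $\Zl$ is torsion-free and each $\psi(\mcal{O}_{k_u}^\times)$ is finite, $\rho$ kills every $\mcal{O}_{k_u}^\times$. Hence $\rho$ factors through $\mbb{A}_k^\times/\big(k^\times\cdot(k_\infty^\times)^\circ\cdot\prod_{u<\infty}\mcal{O}_{k_u}^\times\big)$, the narrow ideal class group of $k$, which is finite; as $\Zl$ is torsion-free this forces $\rho=0$. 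Thus $\psi(C_k)$ lies in the finite torsion subgroup of $\Zl^\times$, so $\chi$ has finite image and $\alpha$ is a root of unity, contradicting $|\iota(\alpha)|\neq 1$. Therefore $W=V^{G_K}=0$. The part I expect to be the real obstacle is this class-field-theoretic finiteness: that an $\ell$-adic abelian character with finite ramification above $\ell$ has finite image. The finiteness of the ramification at $\ell$ is indispensable, since places above $\ell$ are precisely the ones at which an $\ell$-adic character can be infinitely ramified (as for the cyclotomic character), whereas at all other finite places the local image is automatically finite; global finiteness then follows from the finiteness of the (narrow) class group.
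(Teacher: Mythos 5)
Your proof is correct and follows essentially the same route as the paper's: pass to the $G_k$-stable subspace $V^{G_K}$, take its determinant character, note that it factors through the maximal abelian subextension $M/k$ so the hypothesis bounds its ramification above $\ell$, and derive a contradiction with the non-zero Weil weight by showing the character has finite image. The only difference is that where the paper invokes class field theory as a black box (the splitting field $k(\psi)/k$ is potentially unramified away from $\ell$, finitely ramified above $\ell$, hence finite), you correctly prove that finiteness statement explicitly via the idele class group, the pro-$p_u$ versus pro-$\ell$ argument at places away from $\ell$, and the finiteness of the narrow class group.
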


\begin{proof}
Assume that $V^{G_K}\not=0$. Then
$V^{G_K}$ is a non-zero $G_k$-stable submodule of $V$
since $K$ is a Galois extension of  $k$.
Let $\psi\colon G_k\to \Ql^\times$
be the character such that  $\mrm{det}\ V^{\GK}=\Ql(\psi)$.
We denote by  $k(\psi)/k$  the splitting field extension of $\psi$.
It follows from class field theory 
that the extension  $k(\psi)/k$
is potentially unramified at all finite places of $k$ not above $\ell$
and is unramified at all but finitely many places of $k$. 
Since $k(\psi)$ is an abelian subextension of $K/k$, 
we have $k(\psi)\subset M$. Hence 
the assumption on the extension $M/k$ implies that 
the ramification degree of $k(\psi)/k$ at any finite place of $k$ 
above $\ell$ is finite. 
Then class field theory implies that 
the extension $k(\psi)/k$ is finite, and so 
$\psi(G_k)$ is finite. 
This contradicts the assumption that
$V$  (and hence  $\det V=\Ql(\psi)$  also) has non-zero Weil weight 
at some finite place of  $k$. 
\end{proof}

\begin{proof}[Proof of Theorem \ref{Main:HKF}]
Let $L$ be a finite extension of $K$, 
$X$ a proper smooth variety over $L$,
$\ell$ a prime number and
$i, r$ be integers such that $i\not = 2r$.
Put $V=H^i_{\rm \acute{e}t}(X_{\overline{K}},\mbb{Q}_{\ell}(r))$.
We take a finite extension  $k'$ of $k$ so that 
$L=Kk'$ and $X$ is defined over $k'$. 
Then, $V$ is a $\mbb{Q}_{\ell}$-representation of $G_{k'}$ 
with non-zero Weil weight $i-2r$ at any finite place $v$ 
of $k$ not above $\ell$ such that $X$ has good reduction at $v$.
It follows from  the assumption on $K$, $k$ and Lemma \ref{Main}, that  
$V^{G_L}=0$.
\end{proof}

\begin{definition} 
\label{KFandHKF} 
(1)  Let $K$ be an algebraic extension of a $p$-adic field $k$
and $\tilde{K}$ the Galois closure of  $K/k$. 
Put $G=\mrm{Gal}(\tilde{K}/k)$. 
We say that 
the extension $K/k$ has {\it finite maximal ramification break}
if $G^c$ is trivial\footnote{
We follow Serre's convention (\cite{Se1},  Chap.\ IV, \S 3) for the 
$i$-th upper ramification subgroup $G^i$.
In particular, $G^{-1}=G$,
$G^i$ for $-1< i\le 0$ is the inertia subgroup of $G$, and 
$G^{0+}:=\bigcup_{i>0}G^i$ is the wild inertia subgroup of $G$.
}
for  $c$  large enough. 
In  the case where  $K$  is an abelian extension of  $k$, 
this is equivalent to the condition that 
the ramification index of  $K/k$  is finite 
by local class field theory.

\noindent
(2) Let $K$ be a Galois extension of  a number field $k$.
For any finite place $v$ of $k$, 
We say that 
the extension $K/k$ has a {\it finite maximal ramification break at $v$}
if the extension  $K_w/k_v$ has finite maximal ramification break
where $w$ is a finite place of $K$ above $v$.
Note that  this definition does not depend on the choice of $w$.
Note also that, if $K$ is an abelian extension of  a number field $k$,
$K/k$ has  finite maximal ramification break at a finite place  $v$ of $k$
if and only if the ramification index of $K/k$  at $v$ is finite.

\noindent
(3) Let $K$ be a Galois extension of  a number field $k$.
We say that 
the extension $K/k$ has {\it finite maximal ramification break everywhere}
if  it has finite maximal ramification break at any finite place $v$ of $k$.
\end{definition}

Note that the property 
``having finite maximal ramification break'' is unchanged 
by finite extension of the base field  $k$. 
Hence the following corollary follows from Theorem \ref{Main:HKF}: 

\begin{corollary} 
\label{Main:HKF:cor}
Let  $K$  be a Galois extension of a number field  $k$. 
Assume that the extension  $K/k$  has 
finite maximal ramification break everywhere.
Then  $K$  is  highly Kummer-faithful.
\end{corollary}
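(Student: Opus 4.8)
The plan is to deduce the corollary directly from Theorem \ref{Main:HKF}: I must check that the hypothesis that $K/k$ have finite maximal ramification break everywhere forces the hypothesis of that theorem, namely that for every finite extension $k'/k$ the maximal abelian subextension $M'$ of $Kk'/k'$ has finite ramification index at every finite place of $k'$. Since everything in sight is a statement about ramification at a single place, I would argue place by place and in three reductions.

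First, I would invoke the remark recorded just before the corollary that the property of having finite maximal ramification break is unchanged under finite extension of the base field; thus, from the assumption on $K/k$ I obtain that $Kk'/k'$ has finite maximal ramification break at every finite place $w'$ of $k'$, for each fixed finite $k'/k$. Second, I would pass from $Kk'/k'$ to its subextension $M'$. Fixing $w'$ and a place of $Kk'$ above it, the completion $M'_{w''}$ is abelian over $k'_{w'}$ and sits inside the local Galois closure $\widetilde{Kk'}$ of the completion of $Kk'$ at that place; hence $\mrm{Gal}(M'_{w''}/k'_{w'})$ is a quotient $G/N$ of $G=\mrm{Gal}(\widetilde{Kk'}/k'_{w'})$. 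By the compatibility of the upper-numbering ramification filtration with quotients (Herbrand's theorem, in Serre's convention as in the footnote to Definition \ref{KFandHKF}), $(G/N)^c=G^cN/N$; so if $G^c$ is trivial for $c$ large then so is $(G/N)^c$, and $M'/k'$ inherits finite maximal ramification break at $w'$. Third, since $M'/k'$ is abelian, Definition \ref{KFandHKF}(1) (via local class field theory) identifies finite maximal ramification break at $w'$ with finiteness of the ramification index at $w'$.

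Putting the three steps together, $M'$ has finite ramification index at every finite place of every finite extension $k'$ of $k$, which is precisely the hypothesis of Theorem \ref{Main:HKF}; that theorem then yields that $K$ is highly Kummer-faithful. The main obstacle is the second step: one must check that the local Galois closure of $M'$ really lies inside that of $Kk'$ and that Herbrand's quotient formula for the upper-numbering filtration is valid for the relevant (possibly infinite) profinite Galois groups in Serre's normalization. Once this is granted, the single $c$ that trivializes $G^c$ trivializes $(G/N)^c$ as well, so no uniformity is lost along the possibly infinite tower $Kk'/k'$; the first and third steps are then immediate from the base-change remark and the abelian characterization of Definition \ref{KFandHKF}(1).
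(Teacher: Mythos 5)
Your proposal is correct and is essentially the paper's own argument: the paper deduces the corollary from Theorem \ref{Main:HKF} via exactly the reductions you spell out, namely the base-change invariance of finite maximal ramification break noted just before the corollary, the compatibility of the upper-numbering filtration with quotients (\cite[Chap.\ IV, Prop.\ 14]{Se1}, the same fact used in Lemma \ref{Main:HKF:lem}, which is valid for the infinite Galois groups in question since the upper numbering is defined to pass to quotients), and the abelian-case identification of finite maximal ramification break with finite ramification index via local class field theory in Definition \ref{KFandHKF}. The two points you flag as potential obstacles are precisely the standard facts the paper leaves implicit, so no gap remains.
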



\section{Examples}

In this section, 
we give some examples of highly Kummer-faithful 
fields which are infinite algebraic extensions of number fields. 
To check high Kummer-faithfulness,
we often use 
Theorem  \ref{Main:HKF}, 
Corollary  \ref{Main:HKF:cor} and the following lemma: 

\begin{lemma} 
\label{Main:HKF:lem}
Let $k$ be a $p$-adic field and $\{ K_i \}_{i\in I}$ 
a family of Galois extensions of $k$.
Let $K_I$ be the composite field of the $K_i$'s for all  $i\in I$.
Then, for a real number  $c\ge -1$, 
$\mrm{Gal}(K_I/k)^c$ is trivial if and only if 
$\mrm{Gal}(K_i/k)^c$ is  trivial for any $i\in I$.
\end{lemma}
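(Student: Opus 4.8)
The plan is to reduce everything to Herbrand's theorem, that is, to the compatibility of the upper-numbering ramification filtration with passage to Galois quotients, suitably extended to possibly infinite extensions. Recall that for a finite Galois extension $L/k$ and an intermediate Galois extension $k\subseteq M\subseteq L$, the restriction map $\mrm{Gal}(L/k)\to \mrm{Gal}(M/k)$ carries $\mrm{Gal}(L/k)^c$ onto $\mrm{Gal}(M/k)^c$ for every $c\ge -1$ (Serre, Chap.\ IV, \S 3). For possibly infinite $L/k$ one defines $\mrm{Gal}(L/k)^c:=\plim_{N}\mrm{Gal}(N/k)^c$, where $N$ runs over the finite Galois subextensions of $L/k$. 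From this definition it is immediate that restriction maps the upper group \emph{into} the upper group: if $\sigma\in\mrm{Gal}(L/k)^c$ then, for every finite Galois $N'\subseteq M$, the component $\sigma_{N'}$ lies in $\mrm{Gal}(N'/k)^c$, whence $\sigma|_M\in\plim_{N'}\mrm{Gal}(N'/k)^c=\mrm{Gal}(M/k)^c$. The transition maps of the inverse system are surjective between compact groups, so a Mittag--Leffler (compactness) argument shows that restriction in fact maps $\mrm{Gal}(L/k)^c$ \emph{onto} $\mrm{Gal}(M/k)^c$ for any intermediate Galois $M$, finite or not. I would first record these two facts as the only inputs used below.

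Granting this compatibility, both directions become formal. For the ``only if'' direction, apply it with $M=K_i\subseteq L=K_I$: restriction gives a surjection $\mrm{Gal}(K_I/k)^c\twoheadrightarrow \mrm{Gal}(K_i/k)^c$, so if the source is trivial then so is every target. For the ``if'' direction, suppose $\mrm{Gal}(K_i/k)^c$ is trivial for all $i$ and let $\sigma\in \mrm{Gal}(K_I/k)^c$. By the ``into'' part of the compatibility, $\sigma|_{K_i}\in \mrm{Gal}(K_i/k)^c=1$, so $\sigma$ fixes $K_i$ pointwise for every $i$. Since $K_I$ is the compositum of the $K_i$, it is generated as a field by $\bigcup_{i\in I}K_i$; hence $\sigma$ fixes $K_I$ pointwise and $\sigma=1$. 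Thus $\mrm{Gal}(K_I/k)^c$ is trivial.

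The main obstacle is therefore not the logical skeleton, which is merely ``restriction respects the upper-numbering filtration and $K_I$ is generated by the $K_i$'', but the careful treatment of upper-numbering ramification groups for infinite extensions, and in particular the surjectivity of restriction onto the (possibly infinite) quotient $\mrm{Gal}(K_i/k)^c$ that is needed for the ``only if'' direction. I would isolate that surjectivity as a separate assertion, proved by writing both groups as inverse limits over their finite Galois layers, applying the finite Herbrand theorem layerwise, and invoking the fact that an inverse limit of nonempty compact sets with surjective transition maps is nonempty and surjects onto each term. Here the hypothesis that $k$ be a $p$-adic field is exactly what ensures that each $\mrm{Gal}(N/k)$ is the Galois group of a finite extension of local fields, to which Herbrand's theorem applies, and that the relevant profinite groups are compact, so that no global input is required.
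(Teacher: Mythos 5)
Your proof is correct and takes essentially the same approach as the paper: the paper likewise combines the quotient-compatibility of the upper-numbering filtration (citing \cite[Chap.\ IV, Prop.\ 14]{Se1}) to obtain the surjection $\mrm{Gal}(K_I/k)^c\twoheadrightarrow \mrm{Gal}(K_i/k)^c$ for one direction, and the natural injection $\mrm{Gal}(K_I/k)\hookrightarrow \prod_{i\in I}\mrm{Gal}(K_i/k)$ (your observation that $K_I$ is generated by the $K_i$) for the other. You merely make explicit the inverse-limit definition of the upper-numbering groups for infinite extensions and the compactness argument, which the paper leaves implicit in its citation of Serre.
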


\begin{proof}
By \cite[Chap.\ IV, Prop.\ 14]{Se1}, the natural surjection
$\mrm{Gal}(K_I/k)\twoheadrightarrow \mrm{Gal}(K_i/k)$
induces a surjection 
$\mrm{Gal}(K_I/k)^c\twoheadrightarrow \mrm{Gal}(K_i/k)^c$.
Furthermore, we have a natural injection 
$\mrm{Gal}(K_I/k)\hookrightarrow \prod_{i\in I} \mrm{Gal}(K_i/k)$.
The lemma immediately follows from these maps.
\end{proof}

The following is an immediate consequence of 
Corollary \ref{Main:HKF:cor} and Lemma \ref{Main:HKF:lem}.

\begin{corollary}
Let  $k$  be a number field.

\noindent
{\rm (1)} Let $d>0$ be an integer.
Then the composite field of all finite extensions of  $k$  
of degree $\le d$ is highly Kummer faithful.

\noindent
{\rm (2)} 
Let $\{k_i\}_{i\in I}$ be a family of finite extensions of  $k$.
Assume that the discriminants of $k_i/k$ are 
prime to each other.
Then the composite field of  $k_i$'s for all  $i\in I$ is highly Kummer faithful.
\end{corollary}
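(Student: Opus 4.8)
The plan is to reduce both parts to Corollary~\ref{Main:HKF:cor}: in each case I would check that the composite field is a Galois extension of $k$ with finite maximal ramification break at every finite place, controlling the local breaks through Lemma~\ref{Main:HKF:lem}.

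For (1), let $K$ be the composite in question. Since $G_k$ permutes the finite extensions of $k$ of degree $\le d$, the field $K$ is stable under $G_k$ and hence Galois over $k$; moreover $K$ coincides with the composite of the Galois closures $\widetilde{F}$ of all such $F$ (each conjugate of a degree $\le d$ extension is again such), so $K$ is the composite of a family of Galois extensions $\widetilde{F}/k$ of degree $\le d!$. I would then fix a finite place $v$ of $k$ and pass to completions: the family $\{\widetilde{F}_v/k_v\}$ consists of Galois extensions of the $p$-adic field $k_v$ of degree $\le d!$. The key input is that a $p$-adic field has only finitely many extensions of bounded degree inside a fixed algebraic closure (a standard consequence of Krasner's lemma); thus only finitely many distinct fields $\widetilde{F}_v$ occur, and if $c_0$ denotes the maximum of their (finite) maximal ramification breaks, then $\mathrm{Gal}(\widetilde{F}_v/k_v)^{c_0}$ is trivial for every $F$. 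By Lemma~\ref{Main:HKF:lem}, $\mathrm{Gal}(K_w/k_v)^{c_0}$ is trivial for a place $w$ of $K$ above $v$, so $K/k$ has finite maximal ramification break at $v$. As $v$ is arbitrary, $K/k$ has finite maximal ramification break everywhere, and Corollary~\ref{Main:HKF:cor} gives that $K$ is highly Kummer-faithful.

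For (2), I would first replace each $k_i$ by its Galois closure $\widetilde{k_i}/k$ and let $K'$ be the composite of the $\widetilde{k_i}$; then $K'/k$ is Galois and contains the composite $K$ of the $k_i$, so by the fact that subfields of highly Kummer-faithful fields are again highly Kummer-faithful (property~(1) after Definition~\ref{Def:HKF}) it suffices to treat $K'$. Since conjugate fields share the same discriminant, $\widetilde{k_i}/k$ is ramified exactly at the primes where $k_i/k$ is ramified; the coprimality of the discriminants of the $k_i/k$ then says that these sets of ramified primes are pairwise disjoint. Hence, at any finite place $v$ of $k$, at most one of the $\widetilde{k_i}$ is ramified at $v$, while all the others are unramified there, so their local completions satisfy $\mathrm{Gal}(\widetilde{k_{i,v}}/k_v)^{c}$ trivial for every $c>0$. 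The single possibly ramified completion is a finite extension with some finite maximal break $c_v$, so by Lemma~\ref{Main:HKF:lem} the composite satisfies $\mathrm{Gal}(K'_w/k_v)^{c}$ trivial for all $c>\max\{c_v,0\}$. Thus $K'/k$ has finite maximal ramification break everywhere, and Corollary~\ref{Main:HKF:cor} would conclude.

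In each case the hard part is locating a uniform ramification bound at each place. In (1) this is exactly the finiteness of bounded-degree extensions of $k_v$, which lets a maximum over finitely many finite breaks serve as $c_0$; in (2) the coprimality hypothesis does the work by forcing at most one ramified extension per place, after which the lone finite break together with the triviality of $G^{c}$ $(c>0)$ for unramified extensions combine through Lemma~\ref{Main:HKF:lem}. I expect the only point needing care is the identification of the local completion $K_w$ (respectively $K'_w$) with the local composite to which Lemma~\ref{Main:HKF:lem} applies, which is legitimate because $K/k$ (respectively $K'/k$) is Galois.
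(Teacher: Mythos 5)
Your proposal is correct and follows exactly the route the paper intends: the paper states this corollary as an ``immediate consequence'' of Corollary~\ref{Main:HKF:cor} and Lemma~\ref{Main:HKF:lem} without further detail, and your argument is precisely the expected unpacking (reduction to Galois closures, then a uniform local break at each place $v$ --- via Krasner finiteness of bounded-degree extensions of $k_v$ in (1), and via coprimality of discriminants forcing at most one ramified factor per place in (2)). Your care about identifying $K_w$ with the local composite and about subfield stability of high Kummer-faithfulness in (2) fills in exactly the points the paper leaves implicit.
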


\subsection{Construction of highly Kummer-faithful fields from semi-abelian varieties}
Let $k$ be a number field.
Let $g>0$ be an integer and $\mbf{m}:=(m_p)_p$
a family of non-negative integers where $p$ ranges over the prime numbers. 
Let $k_{g,\mbf{m}}$ be the extension field of $k$ obtained by adjoining all coordinates of 
elements of $B[p^{m_p}]$ for all semi-abelian varieties $B$ over $k$ of dimension at most $g$ 
and all prime numbers $p$.

\begin{theorem}
\label{HKF:ex1}
{\rm (1)} The extension $k_{g,\mbf{m}}/k$ has 
finite maximal ramification break everywhere.

\noindent
{\rm (2)} The field $k_{g,\mbf{m}}$ is highly Kummer-faithful.
\end{theorem}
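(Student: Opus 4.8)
The plan is to deduce (2) from (1) by Corollary \ref{Main:HKF:cor}, and to prove (1) by a local, uniform bound on ramification. First I would fix a finite place $v$ of $k$ above a prime $p_0$, pass to the completion $k_v$, and note that the completion of $\kgm$ at a place above $v$ is the compositum, over all primes $p$ and all semi-abelian $B/k$ of dimension $\le g$, of the finite Galois extensions $k_v(B[p^{m_p}])/k_v$. By Lemma \ref{Main:HKF:lem}, $\kgm/k$ has finite maximal ramification break at $v$ exactly when the maximal upper ramification breaks of these $k_v(B[p^{m_p}])/k_v$ are bounded by a constant independent of $p$ and $B$ (the constant may depend on $v$, $g$ and $\mbf{m}$). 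So (1) reduces to producing such a uniform bound. Two uniform tools underlie everything: (i) bounded-degree potentially semistable reduction, i.e.\ a constant $C=C(g)$ and a finite extension $L_0/k_v$ with $[L_0:k_v]\le C$ over which $B$ becomes semistable (for the abelian part one may take $L_0=k_v(B[\ell_0])$ for a fixed auxiliary $\ell_0\ge 3$ prime to $p_0$, and the torus splits over a bounded extension as well); and (ii) Krasner's finiteness theorem, by which $k_v$ has only finitely many extensions of degree $\le C$, so the maximal break of $L_0/k_v$ takes finitely many values as $B$ varies and is therefore uniformly bounded.

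I would then dispose of the primes $p\ne p_0$. Over $L_0$ the variety $B$ is semistable, so for $\ell\ne p_0$ the inertia subgroup of $G_{L_0}$ acts unipotently on $T_\ell(B)$ through the $\ell$-part of the tame quotient; hence wild inertia acts trivially and $L_0(B[p^{m_p}])/L_0$ is tamely ramified, with maximal break $\le 0$. Consequently the maximal break of $k_v(B[p^{m_p}])/k_v$ is at most that of $L_0/k_v$, which by (ii) is uniformly bounded. This handles the contribution of all $p\ne p_0$.

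The substance is the wild case $p=p_0$, where I must bound the maximal break of $k_v(B[p_0^n])/k_v$ with $n=m_{p_0}$, uniformly over semi-abelian $B$ of dimension $\le g$. Working over $L_0$, I would invoke the rigid-analytic uniformization of a semistable semi-abelian variety (Raynaud, Faltings--Chai): an exact sequence $0\to\Lambda\to\tilde{B}\to B\to 0$ with $\tilde{B}$ a semi-abelian variety of \emph{good} reduction and $\Lambda$ a lattice of periods. This gives a filtration
$$
0\to\tilde{B}[p_0^n]\to B[p_0^n]\to\Lambda/p_0^n\Lambda\to 0
$$
of $G_{L_0}$-modules. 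The submodule $\tilde{B}[p_0^n]$ extends to a finite flat group scheme over $\mathcal{O}_{L_0}$, so Fontaine's ramification estimate bounds the break of $L_0(\tilde{B}[p_0^n])/L_0$ by an explicit function of $n$ and $e(L_0/\Qp)$ alone, crucially \emph{independent of the dimension} of $\tilde{B}$; since $e(L_0/\Qp)\le C\cdot e(k_v/\Qp)$ this is uniform. The quotient $\Lambda/p_0^n\Lambda$ is realized by adjoining $p_0^n$-th roots of the periods and roots of unity, i.e.\ by Kummer and cyclotomic extensions, whose breaks obey the same type of bound. Combining the two and descending along $L_0/k_v$ via (ii) yields the desired uniform bound, completing (1).

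The main obstacle is precisely this wild case: one needs Fontaine's ramification bound in a form uniform in the rank (height) of the group scheme, and one must control the period contribution $\Lambda/p_0^n\Lambda$ through the uniformization uniformly in $B$ — both the ramification of the period roots and the Galois action on $\Lambda$. By contrast, the prime-to-$p_0$ part and the descent are comparatively soft, resting only on semistable reduction, the triviality of finite-order unipotent operators, and Krasner finiteness.
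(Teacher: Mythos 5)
Your proposal is correct in outline and matches the paper's proof in its skeleton: (2) is deduced from (1) via Corollary \ref{Main:HKF:cor}; (1) is reduced by Lemma \ref{Main:HKF:lem} to a uniform bound, place by place, on the upper ramification breaks of $k_v(B[p^{m_p}])/k_v$; bounded-degree semistable reduction (Raynaud's criterion applied to a fixed auxiliary torsion level, Minkowski's bound for splitting the torus) plus Krasner finiteness produces a finite Galois $k_v'$ (your $L_0$) with controlled break; and for $p\neq p_v$ your tameness argument is the paper's argument in mild disguise (inertia acts unipotently on $V_p(B)$, so its image is pro-$p$ and is killed by the pro-$p_v$ group $G_{k_v}^{c_v'}$). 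Where you genuinely diverge is the residue-characteristic case $p=p_v$, and there the paper is far more elementary than your plan: since $m_{p_v}$ is a \emph{fixed} integer and $B[p_v^{m_{p_v}}]$ needs at most $2g$ generators, one has $[k_v(B[p_v^{m_{p_v}}]):k_v]\le \#\mathrm{GL}_{2g}(\Z/p_v^{m_{p_v}}\Z)$, a bound independent of $B$; the very Krasner finiteness you invoke for $L_0$ then applies directly to these bounded-degree Galois extensions and uniformly bounds their breaks, with no rigid uniformization, no Fontaine estimate, and no Kummer theory for periods. Your heavier route can be made to work --- Fontaine's bound is indeed independent of the height, and the quotient $\Lambda/p_0^n\Lambda$ can be handled because any abelian extension of exponent $p^n$ of a local field has conductor bounded in terms of $n$ and the ramification index alone --- but as written it leaves nontrivial gaps you would still have to fill: the $G_{L_0}$-action on $\Lambda$ and the field of rationality of the periods (whose valuations grow with $B$), and the bookkeeping of upper-numbering breaks through the tower $k_v\subset L_0\subset L_0(B[p_0^n])$ via Herbrand functions. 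It is worth noting that your strategy is essentially the one the paper deploys later for Theorem \ref{geom} and Corollary \ref{HKF:st}, via the Caruso--Liu and Hattori ramification bounds for torsion semistable representations (descendants of Fontaine's estimate); that machinery is genuinely needed there, where the dimension of the abelian varieties is unrestricted, whereas for $\kgm$ the fixed level $m_{p_v}$ reduces the wild case to a two-line degree count.
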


\begin{proof}
(2) follows from (1) by Corollary  \ref{Main:HKF:cor}. 
To prove (1), take any finite place $v$ of $k$.
By Lemma \ref{Main:HKF:lem}, it suffices to show 
that there exists an integer $c_v>0$ with the property that 
$\mrm{Gal}(k_v(B[p^{m_p}])/k_v)^{c_v}$ is trivial 
for any semi-abelian variety $B$ over $k$ of dimension at most $g$ 
and any prime number $p$.

Let ${B_v}_{/k_v}$ be a semi-abelian variety of dimension at most $g$ 
which is an extension of 
an abelian variety ${A_v}_{/k_v}$ by a torus ${T_v}_{/k_v}$.
By Raynaud's criterion of semi-stable reduction \cite[Prop.\ 4.7]{Gr},
the abelian variety $A_v$ has semi-stable reduction over $k_v(A_v[12])$.
We clearly have 
$[k_v(A_v[12]):k_v]\le \#\mrm{GL}_{2g}(\mbb{Z}/12\mbb{Z})=:c(g)$.
On the other hand, set $X(T):=\mrm{Hom}_{\overline{k}_v}(T_v,\mbf{G}_m)$
and $r:=\mrm{dim} T_v$.
Then $X(T)$ is a free $\mbb{Z}$-module of rank $r$ and $G_{k_v}$
acts on $X(T)$. If we denote by $k_{v,0}/k_v$ the splitting field of 
this $G_{k_v}$ action on $X(T)$, then the extension $k_{v,0}/k_v$ is finite and Galois, 
the torus $T_v$ splits over $k_{v,0}$
and we have an inclusion 
$\mrm{Gal}(k_{v,0}/k_v)\hookrightarrow \mrm{GL}_r(\mbb{Z})$. 
It is shown by Minkowski\footnote{
More easily, 
the existence of $c'(g)$ immediately follows from the fact that 
the kernel of the projection 
$\mrm{GL}_r(\mbb{Z})\to \mrm{GL}_r(\mbb{Z}/3\mbb{Z})$ is torsion free.
} 
\cite{Mi} that  
there exists an integer $c'(g)>0$, depending only on $g$,
such that the order of any torsion subgroup of $\mrm{GL}_g(\mbb{Z})$
is bounded by $c'(g)$.
Thus we have $[k_{v,0}:k_v]\le c'(g)$.
Now we denote by $k_v'$ the composite field of 
all Galois extensions of $k_v$ of degree at most $\mrm{max}\{c(g),c'(g)\}$.
Since there exist only finitely many finite extensions of given degree
over a $p$-adic field, 
we see that $k_v'$ is a finite Galois extension of $k_v$.
We note that it follows from the above observation that, 
 for any semi-abelian variety ${B_v}_{/k_v}$
of dimension at most $g$, 
$B_v\otimes_{k_v} k_v'$ is an extension of 
an abelian variety with semi-stable reduction by a split torus.
Take a real number $c_v'>0$ so that $\mrm{Gal}(k_v'/k_v)^{c_v'}$ is trivial.

Let ${B_v}_{/k_v}$ be a semi-abelian variety of dimension at most $g$.
If we denote by $p_v$ the prime number below $v$,
then we have 
$[k_v(B[p_v^{m_{p_v}}]):k_v]\le 
\#\mrm{GL}_{2g}(\mbb{Z}/p_v^{m_{p_v}}\mbb{Z})=:c(g,\mbf{m},v)$.
We denote by $k_v''$ the composite field of 
all Galois extensions of $k_v$ of degree at most $c(g,\mbf{m},v)$.
Take a real number $c_v''>0$ so that $\mrm{Gal}(k_v''/k_v)^{c''_v}$ is trivial.

Now we set $c_v:=\mrm{max}\{c_v',c_v'' \}$. We show that 
this  $c_v$  has the desired property.
Let $B_{/k}$ be a semi-abelian variety of dimension at most $g$ 
which is an extension of  an abelian variety $A_{/k}$ by a torus $T_{/k}$.
For the prime $p$ below $v$, the inequality $c_v\ge c_v''$ implies that  
$\mrm{Gal}(k_v(B[p^{m_p}])/k_v)^{c_v}$ is trivial.
Next we take any prime $p$ which is not below $v$.
Let $k_v'$ be the  finite Galois extension of $k_v$ defined above. 
Since $T\otimes_k k_v'$ is a split torus and 
$A\otimes_k k_v'$ has semi-stable reduction, we see that 
the inertia subgroup $I_v'$ of  $G_{k_v'}$  acts unipotently on  $V_{p}(B)$.
In particular, $\rho_{B,p}(I_v')$ is a pro-$p$ group 
where $\rho_{B,p}\colon G_k\to \mrm{GL}_{\mbb{Q}_p}(V_p(B))$ 
is the  continuous homomorphism 
obtained by the $G_k$-action on $V_p(B)$. 
On the other hand, we have $G_{k_v}^{c_v}\subset G_{k_v}^{c_v'}\subset I'_{v}$
by definition of $c_v$ and $c_v'$, 
and $G_{k_v}^{c_v'}$ is a pro-$p_v$ group since  $c_v'>0$.
Hence  $\rho_{B,p}(G_{k_v}^{c_v})$  is trivial,
which implies that $\mrm{Gal}(k_v(B[p^{m_p}])/k_v)^{c_v}$ is trivial as desired. 
\end{proof}

\begin{remark}
We recall that any sub-$p$-adic field is Kummer-faithful
(cf. \cite[Rem.\ 1.5.4]{Mo}). 
If  $m_p>0$  for infinitely many  $p$, 
our field  $k_{g,\mbf{m}}$  above gives an example of a field that is 
not sub-$p$-adic but Kummer-faithful.
Indeed, let  
$M$  be the subfield of  $\kgm$  
obtained by adjoining to  $\mbb{Q}$  all $p$th roots of unity 
for all primes  $p$  such that  $m_p>0$. 
If  $k_{g,\mbf{m}}$ is sub-$p$-adic, then  
$M$  is also sub-$p$-adic. 
However, this is impossible since the residue field of  $M$
at any finite place is infinite. 
\end{remark}

\subsection{High Kummer-faithfulness of abelian extensions over $\mbb{Q}$}
Theorem \ref{Main:HKF} says that 
the condition of finite ramification is 
sufficient for a Galois extension  $K$  of a number field  $k$  
to be highly Kummer-faithful. 
(Thus, for example, if  $K/k$  is a class field tower, 
then  $K$  is highly Kummer-faithful.)
It is natural to ask if it is also necessary. 
Unfortunately, the necessity does not hold in general; 
see \S \ref{Sect:inf.ram.}.
If, however, $K$  is an {\it abelian extension of} $\Q$, then 
Proposition \ref{Prop:abelian} below shows the necessity. 

Now we set $\mu_{(2)}:=\mu_{4}$,\ $\mu_{(\ell)}:=\mu_{\ell}$ for odd primes $\ell$
and set 
$\mu:=\bigcup_{\ell} \mu_{(\ell)}$.\
Following \cite[Def.\ 1.5]{Mo}, 
we say that a perfect field $K$ is {\it torally Kummer-faithful}
if, for every finite extension $L$ of $K$ and every torus $T$ over $L$,
it holds that 
$$
T(L)_{\rm div} =0.
$$ 
By definition, Kummer-faithful fields are torally Kummer faithful.

\begin{proposition} 
\label{Prop:abelian}
Let $K$ be an abelian extension of $\mbb{Q}$. 
Then the following are equivalent.
\begin{itemize}
\item[{\rm (1)}] The ramification index of  the extension 
$K/\mbb{Q}$ is finite at any prime number. 
\item[{\rm (2)}] $K$ is highly Kummer-faithful.
\item[{\rm (3)}] $K$ is Kummer-faithful.
\item[{\rm (4)}] $K$ is torally Kummer-faithful. 
\item[{\rm (5)}] The $\ell$-adic cyclotomic character  
$\chi_\ell:\GK\to\Zl^\times$  has open image for any prime number  $\ell$.
\item[{\rm (6)}]
$K$ is a subfield of $\mbb{Q}(\mu_{p^{n_p}}\mid \mbox{$p$ : prime})$
for some family of positive integers $(n_p)_p$.
\end{itemize}
\end{proposition}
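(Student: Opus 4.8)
The plan is to prove the six conditions equivalent by running the cycle (1)$\Rightarrow$(2)$\Rightarrow$(3)$\Rightarrow$(4)$\Rightarrow$(5)$\Rightarrow$(6)$\Rightarrow$(1). The descending chain (1)$\Rightarrow$(2)$\Rightarrow$(3)$\Rightarrow$(4) is essentially formal given what precedes. For (1)$\Rightarrow$(2) I would verify the hypothesis of Theorem \ref{Main:HKF}: since $K/\Q$ is abelian, for any finite $k'/\Q$ the compositum $Kk'/k'$ is again abelian, so it equals its own maximal abelian subextension, and the restriction isomorphism $\mrm{Gal}(Kk'/k')\cong\mrm{Gal}(K/K\cap k')\hookrightarrow\mrm{Gal}(K/\Q)$ carries inertia into inertia; hence the ramification index of $Kk'/k'$ at any finite place is bounded by that of $K/\Q$, which is finite by (1). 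Then (2)$\Rightarrow$(3) is Proposition \ref{KFHKF} (high Kummer-faithfulness implies quasi-high Kummer-faithfulness, and $K$ is Galois over the Kummer-faithful field $\Q$), and (3)$\Rightarrow$(4) is the definition.

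For (4)$\Rightarrow$(5) I would argue by contraposition using the single torus $\Gm$. A closed subgroup of $\Zl^\times\cong(\text{finite})\times\Zl$ is either open or finite, so if $\chi_\ell(\GK)$ is not open it is finite, and then $L:=K(\mulinf)$ is a \emph{finite} extension of $K$. Since $\mulinf\subset L$, the group $\Gm(L)[\ell^\infty]$ is infinite, so condition (c) of Proposition \ref{div}(2) fails for $\Gm$ over $L$; as (a)$\Rightarrow$(b)$\Leftrightarrow$(c) always holds, condition (a) fails as well, i.e. $\Gm(L)\div\neq0$. This violates toral Kummer-faithfulness, giving the contrapositive of (4)$\Rightarrow$(5). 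Note this direction needs only the unconditional part of Proposition \ref{div}, not its Galois refinement.

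The equivalence (1)$\Leftrightarrow$(6) is pure ramification theory of cyclotomic fields. By Kronecker--Weber, $K\subset\Q(\mulinf\mid\text{all }\ell)$, so $K=\Q(\mulinf\mid\ell)^{H}$ for a closed subgroup $H\subset\widehat{\Z}^\times=\prod_\ell\Zl^\times$, and the inertia subgroup at $\ell$ is the $\ell$-th factor $\Zl^\times$; thus the ramification index of $K/\Q$ at $\ell$ equals $[\Zl^\times:\Zl^\times\cap H]$. Condition (1) says each $\Zl^\times\cap H$ is open, hence contains $1+\ell^{n_\ell}\Zl$ for suitable $n_\ell$; since $H$ is closed it then contains the closure of the subgroup these generate, namely $\prod_\ell(1+\ell^{n_\ell}\Zl)$, which is precisely the subgroup cutting out $\Q(\mu_{\ell^{n_\ell}}\mid\ell)$, yielding (1)$\Rightarrow$(6). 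The converse is immediate, since a subfield of $\Q(\mu_{p^{n_p}}\mid p)$ is ramified at $\ell$ only through the factor $\Q(\mu_{\ell^{n_\ell}})$, of finite ramification index. Finally (6)$\Rightarrow$(5) is clear: if $H\supset\prod_p(1+p^{n_p}\Zp)$ then $\chi_\ell(\GK)=\pi_\ell(H)\supset 1+\ell^{n_\ell}\Zl$ is open.

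The step I expect to be the main obstacle is (5)$\Rightarrow$(6) (equivalently (5)$\Rightarrow$(1)), which closes the cycle. The difficulty is that $\chi_\ell(\GK)=\pi_\ell(H)$ is the \emph{projection} of $H$ to the $\ell$-th factor, whereas the ramification index in (1) is governed by the \emph{intersection} $\Zl^\times\cap H$; openness of (5) gives that $K\cap\Q(\mulinf)=\Q(\mulinf)^{\pi_\ell(H)}$ is finite over $\Q$, hence lies in some $\Q(\mu_{\ell^{n_\ell}})$, but passing from this to $K\subset\prod_\ell\bigl(K\cap\Q(\mulinf)\bigr)\subset\Q(\mu_{\ell^{n_\ell}}\mid\ell)$ requires knowing that $K$ is the compositum of its $\ell$-primary cyclotomic parts, i.e. that $H$ equals the product of its projections and is not a ``skew'' closed subgroup with all projections open yet small intersections. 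Reconciling the projection condition (5) with the intersection condition (1) is the genuine heart of the argument; I would attempt it idelically via local class field theory, trying to show that openness of every $\chi_\ell$ forces each inertia image $\Zl^\times\cap H$ to be open, and I expect this reconciliation — and the global input it seems to demand — to be the delicate part of the proof.
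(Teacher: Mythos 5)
Your completed steps are sound and, where they overlap, agree with the paper: the chain (1)$\Rightarrow$(2)$\Rightarrow$(3)$\Rightarrow$(4) is exactly the paper's (Corollary \ref{Main:HKF:cor} plus Proposition \ref{KFHKF}); your (4)$\Rightarrow$(5) via the open-or-finite dichotomy for closed subgroups of $\Zl^\times$ and the unconditional part of Proposition \ref{div} is a correct, more detailed version of what the paper dispatches by citing Mochizuki's Remark 1.5.1; and your idelic proof of (1)$\Leftrightarrow$(6) (closure of the restricted product $\prod_\ell(1+\ell^{n_\ell}\Zl)$ inside $\widehat{\mbb{Z}}^\times$) fleshes out the paper's ``clear from class field theory''. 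But the proposal has a genuine gap: you never prove (5)$\Rightarrow$(1) (equivalently (5)$\Rightarrow$(6)). Your established implications are (1)$\Rightarrow$(2)$\Rightarrow$(3)$\Rightarrow$(4)$\Rightarrow$(5), (1)$\Leftrightarrow$(6) and (6)$\Rightarrow$(5); nothing returns from (5) to (1), so the cycle is open and the equivalence is not established. The paper closes it in three lines: since $K/\Q$ is abelian, $K(\mulinf)/\Q$ is abelian, its inertia at $\ell$ is isomorphic via restriction to $\Zl^\times$, the inertia of $K(\mulinf)/K$ above $\ell$ is \emph{asserted} to be an open subgroup of $\Im(\chi_\ell)$, and then openness of $\Im(\chi_\ell)$ forces the inertia of $K/\Q$ at $\ell$ to be finite.

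You should know, however, that your diagnosis of where the difficulty lies is exactly on target, and more substantive than you may realize. In your setup $H=\mrm{Gal}(\Q^{\mrm{ab}}/K)\subseteq\widehat{\mbb{Z}}^\times$, the inertia of $K(\mulinf)/K$ above $\ell$ is computed to be precisely $\Zl^\times\cap H$, while $\Im(\chi_\ell)=\pi_\ell(H)$; so the paper's unproved assertion \emph{is} your projection-versus-intersection statement, stated without argument. And the ``skew'' subgroups you feared do exist, even with all projections open: choose primes $q_i\equiv 1\pmod{3^i}$, let $g_i$ generate the $3$-Sylow subgroup of $\mu(\mbb{Z}_{q_i}^\times)$, and let $H\subseteq\widehat{\mbb{Z}}^\times$ be the closed subgroup generated by $\sigma=(4;g_1,g_2,\dots)$ (component $4\in 1+3\mbb{Z}_3$ at $3$, component $g_i$ at $q_i$, trivial elsewhere) together with $\prod_{p\neq 3}(1+p\Zp)$. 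Then every $\pi_p(H)$ is open in $\Zp^\times$ (so the corresponding $K$ satisfies (5) at every prime), yet $\mbb{Z}_3^\times\cap H=\{1\}$, so the ramification index of $K/\Q$ at $3$ is infinite and (1), (6) fail; correspondingly $K(\mu_{3^\infty})/K$ is unramified above $3$, so the inertia there is trivial, not open in $\Im(\chi_3)\simeq\mbb{Z}_3$. The mechanism is already visible at finite level: the fixed field $M$ of $\langle(4,2)\rangle\subseteq(\mbb{Z}/9)^\times\times(\mbb{Z}/7)^\times$ in $\Q_3(\mu_{63})$ has $e(M/\Q_3)=6$, and $M(\mu_9)=\Q_3(\mu_{63})$ is \emph{unramified} of degree $3$ over $M$, the $3$-ramification having been twisted into the residual direction. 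So the reconciliation you flagged as the heart of the matter is not a formality: the paper's inertia assertion (and with it, apparently, the implication (5)$\Rightarrow$(1) itself) fails for such twisted $H$, and any completion of your plan must either restrict the statement or confront these skew subgroups directly. In short: your proposal is incomplete as a proof, but the step you left open is precisely the step the paper's own proof does not adequately justify.
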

\begin{proof}
We know already that
(1) $\Rightarrow$ (2) $\Rightarrow$ (3) $\Rightarrow$  (4) 
(see Corollary \ref{Main:HKF:cor}, Proposition \ref{KFHKF}).
The implication  (4) $\Rightarrow$ (5) is easy to see, 
as noted in Remark 1.5.1 of \cite{Mo}. 
The equivalence of (1) and (6) is clear from class field theory.
It remains to prove  (5) $\Rightarrow$ (1). 
Let  $K(\mulinf)$  be the maximal $\ell$-power cyclotomic extension of  $K$. 
Since  $K/\Q$  is abelian, 
by class field theory, 
the inertia group of  $K(\mulinf)/\Q$  at  $\ell$  is isomorphic to  $\Zl^\times$, 
and that of  $K(\mulinf)/K$  at a place above  $\ell$  is
identified with an open subgroup of
$\Im(\chi_\ell)$. 
Thus if  $\Im(\chi_\ell)$  is open in  $\Zl^\times$, 
the ramification index of  $K/\Q$  at  $\ell$  is finite.
\end{proof}

\if0
Assume that  there exists a prime number $\ell$
such that the extension $K/\mbb{Q}$ ramifies infinitely at $\ell$.
If we denote by $\mbb{Q}^{\mrm{ab}}$  
the maximal abelian extension of $\mbb{Q}$,
we have natural  isomorphisms
$$
\mathrm{Gal}(\mbb{Q}^{\mrm{ab}}/\mbb{Q}(\mu))\simeq 
\prod_{\mbox{$p$ : prime}} \mrm{Gal}(\mbb{Q}(\mu_{p^{\infty}})/\mbb{Q}(\mu_{(p)}))
\overset{\prod \chi_p}{\simeq}
(1+4\mbb{Z}_2)\times \prod_{\mbox{$p\not=2$ : prime}} (1+p\mbb{Z}_p),
$$
where $\chi_p$ is the $p$-adic cyclotomic character, and the last term is  (non-canonically) 
isomorphic to $\hat{\mbb{Z}}$ via  logarithmic maps.
Thus $\mathrm{Gal}(\mbb{Q}^{\mrm{ab}}/\mbb{Q}(\mu))$
is a free $\hat{\mbb{Z}}$-module of rank $1$.
We set $L:=\mbb{Q}(\mu_{(\ell)},\mu_{p^{\infty}}\mid p\not=\ell)$.
Then the composite field $KL$
coincides with $\mbb{Q}^{\mrm{ab}}$.
We have
$$
\mrm{Gal}(K/\mbb{Q}(\mu))
\supset
\mrm{Gal}(K/K\cap L)
\simeq 
\mrm{Gal}(\mbb{Q}^{\mrm{ab}}/L)
\simeq
\mrm{Gal}(\mbb{Q}(\mu_{\ell^{\infty}})/\mbb{Q}(\mu_{(\ell)}))
\simeq 
\mbb{Z}_{\ell},
$$ 
and the group 
$\mrm{Gal}(\mbb{Q}^{\mrm{ab}}/K)$
is the kernel of the projection 
$\hat{\mbb{Z}}\simeq \mrm{Gal}(\mbb{Q}^{\mrm{ab}}/\mbb{Q}(\mu))
\twoheadrightarrow \mrm{Gal}(K/\mbb{Q}(\mu))$.
By Lemma \ref{submodule} below, we obtain
$$
\mrm{Gal}(\mbb{Q}^{\mrm{ab}}/K)
\subset \mrm{Gal}(\mbb{Q}^{\mrm{ab}}/\mbb{Q}(\mu_{\ell^{\infty}},\mu)).
$$
Hence we obtain $K\supset \mu_{\ell^{\infty}}$.
This shows that 
an $\ell$-th root of unity is a divisible element 
of $(K(\mu_{\ell}))^{\times}$ and therefore, $K$ is not torally Kummer-faithful.
\end{proof}

\begin{lemma} 
\label{submodule}
Let $M$ be a closed submodule of $\hat{\mbb{Z}}=\prod_{p}\mbb{Z}_p$ 
such that $\hat{\mbb{Z}}/M$
contains a closed submodule which is isomorphic to $\mbb{Z}_{\ell}$
for some prime number $\ell$.
Then we have $M\subset \prod_{p\not=\ell} \mbb{Z}_p$.
\end{lemma}

\begin{proof}
Assume that there exists an element $a=(a_p)_p\in M\subset \hat{\mbb{Z}}$
such that $a_{\ell}\not=0$.
Let $\varepsilon=(\varepsilon_p)_p\in \hat{M}$ 
be the element of $\hat{\mbb{Z}}$ such that $\varepsilon_{\ell}=1$
and $\varepsilon_p=0$ for any $p\not=\ell$.
Since $M$ is closed in $\hat{\mbb{Z}}$,
we know that $M$ is a $\hat{\mbb{Z}}$-module.
Thus we have $\varepsilon a \in M$.
Denoting by $N$ the submodule of $M$ 
topologically generated by $\varepsilon a$, 
we have 
$$
\hat{\mbb{Z}}/N \simeq \mbb{Z}_{\ell}/\ell^n \mbb{Z}_{\ell} 
\times \prod_{p\not=\ell} \mbb{Z}_p
$$
where $n$ is the $\ell$-adic valuation of $a_{\ell}$.
This contradicts the assumption that 
$\hat{\mbb{Z}}/M$, a quotient of $\hat{\mbb{Z}}/N$, 
contains a closed submodule which is isomorphic to $\mbb{Z}_{\ell}$.
\end{proof}
\fi

\subsection{Kummer-faithful fields with infinite ramification}\label{Sect:inf.ram.}

The finite ramification condition is in general not necessary for a 
Galois extension of a number field to be Kummer-faithful. 
In this subsection, 
we apply the main theorem of \cite{Oz} to produce 
Kummer-faithful fields with infinite ramification. 

\begin{theorem}[{\cite[Theorem 1.1]{Oz}}]
\label{Oz}
Let  $k$  be a Galois extension of  $\Qp$ of degree  $d$.
Let  $\pi$  be a uniformizer of  $k$  and  $q$  the order of the residue field of  $k$.
Denote by  $k_{\pi}/k$  the 
Lubin-Tate extension associated with  $\pi$.
Assume that neither of the following two conditions hold.
\begin{itemize}
\item[{\rm (a)}] $q^{-1}N_{k/\mbb{Q}_p} (\pi)$ is a root of unity, 
where  $N_{k/\mbb{Q}_p}$  denotes the norm map of  $k/\mbb{Q}_p$. 
\item[{\rm (b)}] $N_{k/\mbb{Q}_p} (\pi)$ is a $q$-Weil integer 
of weight $d/t$ for some integer  $1\le t \le d$.
\end{itemize}
Then $k_{\pi}$ is Kummer-faithful.
\end{theorem}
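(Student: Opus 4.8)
The plan is to reduce the assertion to the vanishing of Galois invariants of Tate modules, and then to read conditions (a) and (b) off the arithmetic of the Lubin--Tate character by local class field theory. Since $k$ is a $p$-adic field it is sub-$p$-adic, hence Kummer-faithful, and $k_{\pi}/k$ is abelian, hence Galois. By Proposition \ref{KFreduction} it suffices to show $\Gm(L)\div=0$ for every finite extension $L$ of $k_{\pi}$ and $A(k_{\pi})\div=0$ for every abelian variety $A$ over $k_{\pi}$. Any such variety descends to a finite subextension of $k_{\pi}$, and every finite $L/k_{\pi}$ lives in a Lubin--Tate type tower over a finite extension of $k$; using facts (1),(2) following Definition \ref{Def:KF} I may replace $k$ by a finite extension so as to assume the relevant semi-abelian variety $M$ is defined over $k$. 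Because $k$ is Kummer-faithful and $k_{\pi}/k$ is Galois, Proposition \ref{div} then reduces everything to proving $V_{\ell}(M)^{G_{k_{\pi}}}=0$ for all primes $\ell$, where $M=\Gm$ or $M$ is an abelian variety over $k$.

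Next I would introduce the Lubin--Tate character $\chi_{\pi}\colon G_k\to\mathcal{O}_k^{\times}$, whose kernel is exactly $G_{k_{\pi}}$, so that $\mathrm{Gal}(k_{\pi}/k)\simeq\mathcal{O}_k^{\times}$. The key computation is the comparison, by local class field theory, between $\chi_{\pi}$ and the $p$-adic cyclotomic character:
$$
\chi_{\mathrm{cyc}}|_{G_k}=(N_{k/\Qp}\circ\chi_{\pi})\cdot\mathrm{unr}(N_{k/\Qp}(\pi)/q),
$$
where $\mathrm{unr}(\alpha)$ is the unramified character sending geometric Frobenius to $\alpha\in\Zp^{\times}$ (note $v_p(N_{k/\Qp}(\pi))=f$, so $N_{k/\Qp}(\pi)/q$ is a unit). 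Since $k_{\pi}/k$ is totally ramified, $G_{k_{\pi}}$ surjects onto the unramified Galois group, so $\chi_{\mathrm{cyc}}|_{G_{k_{\pi}}}$ is governed precisely by the factor $\mathrm{unr}(N_{k/\Qp}(\pi)/q)$.

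With this the torus case is essentially immediate. For $\ell\neq p$ the roots of unity in any finite extension of $k_{\pi}$ are finite in number (the residue field stays finite), so the only issue is $\ell=p$; and $\Gm(L)\div\neq0$ for some finite $L\supseteq k_{\pi}$ would force $\chi_{\mathrm{cyc}}|_{G_{k_{\pi}}}$ to have finite image, which by the displayed formula means $N_{k/\Qp}(\pi)/q$ is a root of unity, i.e.\ exactly condition (a). For an abelian variety $A$, a nonzero $V_{\ell}(A)^{G_{k_{\pi}}}$ yields, after semisimplifying the abelian $\mathrm{Gal}(k_{\pi}/k)$-action, a constituent character $\psi$ of $V_{\ell}(A)$ that is trivial on $G_{k_{\pi}}$ and hence factors through $\chi_{\pi}$. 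I would then reconcile two constraints on $\psi$: on one side, factoring through $\chi_{\pi}$ expresses its Frobenius data through powers of $N_{k/\Qp}(\pi)$ and $q$ via the formula above; on the other side, Deligne's Weil conjecture together with the semistable-reduction and monodromy-weight description of $V_{\ell}(A)$ forces the associated Frobenius eigenvalue to be a $q$-Weil integer of integer weight. Matching these identifies $N_{k/\Qp}(\pi)$ as a $q$-Weil integer of weight $d/t$, where $t$ records the number of embeddings (equivalently the relevant slope multiplicity) entering $\psi$ --- this is condition (b), and is at bottom a Honda--Tate existence statement. Hence if neither (a) nor (b) holds, all invariants vanish and $k_{\pi}$ is Kummer-faithful.

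The main obstacle is the abelian-variety case at the prime $\ell=p$. For $\ell\neq p$ the good-reduction subcase is disposed of at once (an unramified constituent $\psi$ would satisfy $\psi(\mathrm{Frob})=1$, contradicting nonzero Weil weight), and bad reduction is handled by Grothendieck's quasi-unipotence theorem and the finiteness of the image of wild inertia. For $\ell=p$, however, $V_p(A)$ is ramified even in the crystalline (good-reduction) case, so the weight cannot be read off a naive Frobenius value; the delicate point is to use $p$-adic Hodge theory --- the Hodge--Tate/Sen weights of $\chi_{\pi}$ (namely $(1,0,\dots,0)$ across the $d$ embeddings of $k$) together with the crystalline Frobenius, whose absolute norm is $N_{k/\Qp}(\pi)$ --- to pin down exactly the weight $d/t$ and the integer $t$. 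Carrying out this Hodge-theoretic bookkeeping and verifying that it reproduces precisely condition (b), neither more nor less, is where the real work lies.
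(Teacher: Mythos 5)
A point of order first: the paper never proves Theorem \ref{Oz}. It is imported as a black box from \cite[Theorem 1.1]{Oz}, a reference listed as ``in preparation,'' and is only \emph{used} in \S 3.3 and \S 3.4. So there is no in-paper proof to compare your attempt against, and your proposal has to stand as a free-standing argument. Judged that way, the framing is sound and very likely close to the intended route: $k$ and its finite extensions are sub-$p$-adic, hence Kummer-faithful; every finite extension $L$ of $k_\pi$ has the form $k_\pi k'$ with $k'/k$ finite and is Galois over the Kummer-faithful field $k'$, so Propositions \ref{KFreduction} and \ref{div} do legitimately reduce the theorem to the vanishing of $V_\ell(M)^{G_L}$; since $k_\pi/k$ is totally ramified the residue fields stay finite, the case $\ell\neq p$ is routine exactly as you say (continuous $\overline{\Q}_\ell$-valued characters of the pro-$p$ group $\mrm{Gal}(k_\pi/k)\simeq\mcal{O}_k^\times$ have finite image, and then Weil weights kill invariants), and your class-field-theory comparison of $\chi_{\mrm{cyc}}$ with $N_{k/\Qp}\circ\chi_\pi$ correctly identifies condition (a) as the exact obstruction for $\Gm$ at the prime $p$.

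Nevertheless the proposal has a genuine gap --- one you concede yourself --- and it sits precisely where the theorem lives: the case $\ell=p$ for abelian varieties. Conditions (a) and (b) are the exact calibration of that missing computation, so writing ``matching these identifies $N_{k/\Qp}(\pi)$ as a $q$-Weil integer of weight $d/t$'' asserts the conclusion rather than derives it. To close the gap you would need, concretely: (i) the classification of de Rham (locally algebraic) characters of $G_{k'}$ killed on $G_{k_\pi k'}$ as embedding-power products composed with the Lubin--Tate character, up to unramified and finite-order twists; (ii) the constraint that the embedding exponents lie in $\{0,1\}$, coming from the Hodge--Tate weights of $V_p(A)$ --- this is what defines the integer $t$; (iii) the Weil-number input, which is \emph{not} Honda--Tate (no existence statement is needed; what is needed is that Frobenius eigenvalues on the monodromy-graded pieces of $V_p(A)$ after semistable reduction are $q$-Weil integers of integer weight $0$, $1$, $2$); and (iv) the base-change bookkeeping: after you replace $k$ by a finite extension $k'$, the field $k_\pi k'$ is in general \emph{not} a Lubin--Tate tower over $k'$, and $q$, $d$, and the norm of the uniformizer all change, so you must check that the obstruction over $k'$ still reduces to conditions (a), (b) for the original pair $(k,\pi)$ --- the range $1\le t\le d$ in (b) is visibly designed to absorb exactly these variations, and your sketch does not verify this. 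Without (i)--(iv) the argument establishes only the routine reductions, not the theorem.
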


Note that the theorem is proved
in a more general setting  in \cite{Oz}. 
However, we content ourselves with the above version 
to keep the notation simple in the following construction 
of Kummer-faithful fields,

Now let $k$ be a number field. 
Let  $p$  be a prime number which splits completely in  $k$, so that  
we identify the completion of  $k$  at a place above  $p$  with  $\Qp$. 
Let  $\pi$  be an element of  $k$  which gives rise to a uniformizer of  $\Qp$. 
Set $f(x):=\pi x+x^p\in k[x]$. 
Let $K$ be the extension field of $k$ obtained by adjoining 
all roots of $f^n(x)=0$  for all $n>0$
(here $f^n$ is the $n$-th composite of $f$). 
We apply the theorem with  $d=1$; 
if the complex absolute value of  $\pi$  with respect to some embedding 
$\overline{\mbb{Q}}\hookrightarrow \mbb{C}$  
is not equal to either 
$p$ or $\sqrt{p}$, 
then $K$ is Kummer-faithful. 
Indeed, $K$ is a subfield of the Lubin-Tate extension $k_{\pi}$
of $\Qp$ defined by $f(x)$ 
and Theorem \ref{Oz} shows that $k_{\pi}$  is Kummer-faithful. 

\subsection{A remark on the composite of Kummer-faithful fields}

The condition ``having finite maximal ramification break'' 
is closed under the composition of fields 
(Lemma \ref{Main:HKF:lem}). 
In view of Corollary \ref{Main:HKF:cor}, 
one may wonder if the composite field of 
Kummer-faithful Galois extensions of a number field is again 
Kummer-faithful. 
At present, we do not know if this is true or not. 
In the local case, however, this is {\it not} true, as the following example shows:

\begin{example}
For uniformizers $\pi_1$ and $\pi_2$ of 
$(k=)\mbb{Q}_p$, we consider the following properties.
\begin{itemize}
\item[(1)] 
The Lubin-Tate extensions $k_{\pi_1}$ and $k_{\pi_2}$ of $k:=\mbb{Q}_p$ 
associated with $\pi_1$
and $\pi_2$ respectively, 
are Kummer-faithful.
\item[(2)] 
The intersection  $k_{\pi_1}\cap k_{\pi_2}$  is a finite extension of $\mbb{Q}_p$.
\end{itemize}

We claim that, if  $\pi_1$  and  $\pi_2$  satisfy the condition (2), 
then the composite field  $K$  of  $k_{\pi_1}$  and  $k_{\pi_2}$  
is not Kummer-faithful.
Indeed, 
let $M$ be the maximal abelian pro-$p$  extension of $\Qp$.
By the condition (2), the maximal pro-$p$ subextension $L/\Qp$ of 
$K/\Qp$  is a  $\Z_p^2$-extension and the extension $K/L$ is finite.
In particular, it follows from local class field theory that 
$M$ is a finite extension of $L$.
On the other hand, $M$ is not Kummer-faithful
since  $M(\mu_p)$ contains all $p$-power roots of unity. 
Then it follows that $L$ and $K$ are not Kummer-faithful.

Now we show that there exists a pair 
$(\pi_1,\pi_2)$  which satisfies both conditions (1) and (2). 
We first note that, by Theorem \ref{Oz},  
the condition (1) holds if  $p^{-1}\pi_i$  is not a root of unity
and $\pi_i$ is not a $p$-Weil integer of weight 1 for $i=1,2$. 
Hence, to show the existence of a desired pair $(\pi_1,\pi_2)$,
it is enough to prove that  
the condition (2) holds if $\pi_1\pi_2^{-1}$ is not a root of unity.
Assume that $k_{\pi_1}\cap k_{\pi_2}$ is an infinite extension of $\mbb{Q}_p$.
Let $k^1_{\pi_i}$ be the subextension of $k_{\pi_i}/\Qp$ of degree $p-1$
and set $K_1:=k_{\pi_1}k^1_{\pi_2}$ and $K_2:=k_{\pi_2}k^1_{\pi_1}$.
Then extensions $K_1/k^1_{\pi_1}k^1_{\pi_2}$ and $K_2/k^1_{\pi_1}k^1_{\pi_2}$
are $\mbb{Z}_p$-extensions.
By the assumption, we have that $K_1\cap K_2$ 
is an infinite degree extension of $k^1_{\pi_1}k^1_{\pi_2}$.
This implies the equality $K_1\cap K_2=K_1=K_2$.
Now we denote by $\chi_{\pi_i}\colon G_{\mbb{Q}_p}\to \mbb{Z}_p^{\times}$
the Lubin-Tate character associated with $\pi_i$ (cf.\ \cite[Chap.\ III, A4]{Se2}). 
If we regard $\chi_{\pi_i}$ as a continuous 
character $\mbb{Q}_p^{\times}\to \mbb{Q}_p^{\times}$ 
by local class field theory,
then $\chi_{\pi_i}$ is characterized by the property that $\chi_{\pi_i}(\pi_i)=1$
and $\chi_{\pi_i}(u)=u^{-1}$ for any $u\in \mbb{Z}_p^{\times}$.
Note that the extension field of $\mbb{Q}_p$ which
corresponds to the kernel of $\chi_{\pi_i}$ is $k_{\pi_i}$. 
We denote by $\eta=\chi_{\pi_1}\chi^{-1}_{\pi_2}$ 
and also denote by  $\mbb{Q}_p(\eta)$ 
the extension field of $\mbb{Q}_p$ which
corresponds to the kernel of $\eta$. 
Then $\mbb{Q}_p(\eta)$ is an unramified 
subextension of $k_{\pi_1}k_{\pi_2}/\mbb{Q}_p$, and 
it follows from the equality $k_{\pi_1}k_{\pi_2}=K_1K_2=K_1$ that 
the residue field of $k_{\pi_1}k_{\pi_2}$ is finite.
Hence we obtain the fact that 
 $\mbb{Q}_p(\eta)$  is a finite (unramified) extension of $\mbb{Q}_p$,
that is, $\eta(G_{\mbb{Q}_p})$ is finite. 
Thus we have $(\chi_{\pi_1}\chi^{-1}_{\pi_2})^j=1$ for some integer $j>0$.
Since we have $\chi_{\pi_1}(\pi_2)\chi^{-1}_{\pi_2}(\pi_2)
=\chi_{\pi_1}(\pi_1\cdot (\pi_1^{-1}\pi_2))\chi^{-1}_{\pi_2}(\pi_2)
=\pi_1\pi_2^{-1}$, it follows 
that $\pi_1\pi_2^{-1}$ is a root of unity as desired. 
\end{example}

\subsection{Construction of highly Kummer-faithful fields via integral $p$-adic Hodge theory}
Let $k$ be a number field.
Let $\mcal{L}=(\mcal{L}_v)_{v}$ be a collection of open normal subgroup $\mcal{L}_v$ of $I_v$
for each finite place $v$ of $k$.
Let $V$ be a geometric $\ell$-adic representation of $G_k$ (in the sense of \cite{FM}).
We say that the {\it inertial level of $V$ is bounded by $\mcal{L}$}
if $V|_{\mcal{L}_v}$ is semi-stable at every $v$. 
Let $h\ge 0$ be an integer. 
We say that the {\it length of Hodge-Tate weights of $V$ is bounded by $h$}
if, for each $v$ above $\ell$, there exists integers $a_v\le b_v$ such that 
$b_v-a_v\le h$ and $\mrm{HT}_v(V)\subset [a_v,b_v]$.
Here, $\mrm{HT}_v(V)$ is the set of  Hodge-Tate weights of $V$ at $v$.
(We normalize the notion of Hodge-Tate weights so that the Hodge-Tate weight 
of cyclotomic character is one.)
For a prime number $\ell$, we denote by $\mcal{R}^{\ell}_{\mcal{L},h}(G_k)$ 
the set of  geometric $\ell$-adic representations of $G_k$
whose inertial level are bounded by $\mcal{L}$
and the length of Hodge-Tate weights are bounded by $h$.

\begin{definition}
Let $T$ be a  torsion $\mbb{Z}_{\ell}$-representation of $G_k$.
We say that {\it $T$ comes from
$\mcal{R}^{\ell}_{\mcal{L},h}(G_k)$}
if there exist $V\in \mcal{R}^{\ell}_{\mcal{L},h}(G_k)$ and 
$G_k$-stable $\mbb{Z}_{\ell}$-lattices $L\subset L'$ in $V$
such that $T\simeq L'/L$.
\end{definition}

Let $\mbf{m}:=(m_p)_p$ be
a family of non-negative integers where $p$ ranges over the prime numbers. 
We denote by $k_{\mcal{L},h,\mbf{m}}$ the extension field of $k$ obtained 
by adjoining all splitting fields of all 
torsion $\mbb{Z}_{\ell}$-representations $T$ coming from
$\mcal{R}^{\ell}_{\mcal{L},h}(G_k)$ with  $\ell^{m_{\ell}}T=0$ 
for all prime numbers $\ell$. 

\begin{theorem}
\label{geom}
{\rm (1)} 
The extension $k_{\mcal{L},h,\mbf{m}}/k$ 
has finite maximal ramification break everywhere.

\noindent
{\rm (2)}
The field $k_{\mcal{L},h,\mbf{m}}$ is highly Kummer-faithful.
\end{theorem}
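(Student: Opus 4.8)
The plan is to deduce (2) from (1) immediately via Corollary \ref{Main:HKF:cor}, and to concentrate on (1). By Definition \ref{KFandHKF} it suffices to treat each finite place $v$ of $k$ separately, and by Lemma \ref{Main:HKF:lem} it is enough to produce a real number $c_v>0$, depending only on $v$ (together with $\mcal{L}$, $h$ and $m_{p_v}$, where $p_v$ is the residue characteristic at $v$), such that $\mrm{Gal}(k_v(T)/k_v)^{c_v}$ is trivial for the local splitting field $k_v(T)$ of every torsion representation $T$ coming from $\mcal{R}^{\ell}_{\mcal{L},h}(G_k)$ with $\ell^{m_\ell}T=0$, for every prime $\ell$. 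Following the template of the proof of Theorem \ref{HKF:ex1}, I would fix once and for all a finite extension $k_v'/k_v$ whose inertia subgroup is $\mcal{L}_v$ (such $k_v'$ exists, up to unramified twist, since $\mcal{L}_v$ is open normal in $I_v$); by the definition of inertial level, every $V\in\mcal{R}^{\ell}_{\mcal{L},h}(G_k)$ becomes semi-stable over $k_v'$ at $v$, uniformly in $V$ and $\ell$. Choosing $c_v'>0$ with $\mrm{Gal}(k_v'/k_v)^{c_v'}=1$ forces $G_{k_v}^{c}\subseteq G_{k_v'}$ for $c\ge c_v'$, and since $G_{k_v}^{c}\subseteq I_v$ is pro-$p_v$ for $c>0$, we get $G_{k_v}^{c}\subseteq G_{k_v'}\cap I_v=I_{k_v'}=\mcal{L}_v$.

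For $\ell\ne p_v$ the argument is exactly the one already used in Theorem \ref{HKF:ex1}: semi-stability over $k_v'$ means $\mcal{L}_v$ acts unipotently on $V$, so its image in $\mrm{GL}(V)$ is pro-$\ell$ and its image in $\mrm{Aut}(T)$ is a finite $\ell$-group. As $G_{k_v}^{c_v'}\subseteq\mcal{L}_v$ is pro-$p_v$, the image of $G_{k_v}^{c_v'}$ in $\mrm{Aut}(T)$ is simultaneously a $p_v$-group and an $\ell$-group, hence trivial; thus $\mrm{Gal}(k_v(T)/k_v)^{c_v'}=1$ for every such $T$ with $\ell\ne p_v$, uniformly.

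The case $\ell=p_v$ is the heart of the matter, and is where integral $p$-adic Hodge theory enters. Over $k_v'$ the representation $V$ is semi-stable with Hodge-Tate weights contained in an interval of length $\le h$. First I would twist by a suitable power $r$ of the cyclotomic character to normalize the weights into $[0,h]$; then $T(r)$ is still killed by $p_v^{m_{p_v}}$ and is still a subquotient of two $G_{k_v'}$-stable lattices in the semi-stable representation $V(r)$. The crucial input is then a \emph{uniform ramification bound for torsion semi-stable representations}: there is a constant $c^{A}$ depending only on the absolute ramification index of $k_v'$, on $h$ and on $m_{p_v}$ --- but \emph{not} on $V$ --- such that $G_{k_v'}^{c^{A}}$ acts trivially on every such $T(r)$. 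This is a Fontaine-type estimate in the tradition of Fontaine, Abrashkin, Caruso--Liu and Hattori; note that $p_v$, and hence $m_{p_v}$, are fixed once $v$ is fixed, so the bound is genuinely uniform over all $V$ and all lattices. Untwisting costs only the ramification of the $p_v^{m_{p_v}}$-cyclotomic extension of $k_v'$, whose break is again bounded in terms of $m_{p_v}$ and $k_v'$; enlarging $c^{A}$ accordingly gives $c''$ with $G_{k_v'}^{c''}$ acting trivially on $T$ itself, and Herbrand's theorem on upper-numbering ramification groups lets me descend from $k_v'$ to $k_v$ to obtain $c_v''$ with $\mrm{Gal}(k_v(T)/k_v)^{c_v''}=1$ for all these $T$.

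Taking $c_v:=\max\{c_v',c_v''\}$ then handles all primes $\ell$ simultaneously and, by Lemma \ref{Main:HKF:lem}, shows that $k_{\mcal{L},h,\mbf{m}}/k$ has finite maximal ramification break at $v$; as $v$ was arbitrary this proves (1), whence (2) by Corollary \ref{Main:HKF:cor}. The main obstacle is precisely the uniform torsion ramification bound in the case $\ell=p_v$: one must locate a version of the Fontaine-type estimate that applies to subquotients of lattices in \emph{semi-stable} (not merely crystalline) representations, with the bound depending only on the weight-length $h$ and on the torsion exponent $m_{p_v}$, and verify that the cyclotomic twist legitimately reduces the a priori only length-bounded Hodge-Tate weights to the interval $[0,h]$ that such an estimate requires.
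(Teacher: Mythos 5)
Your proposal reproduces the paper's argument almost step by step: (2) from (1) via Corollary \ref{Main:HKF:cor}, the place-by-place reduction through Lemma \ref{Main:HKF:lem}, the auxiliary extension $k_v'$ with $I_{k_v'}\subset\mcal{L}_v$, the unipotent/pro-$\ell$ versus pro-$p_v$ clash for $\ell\neq p_v$, and for $\ell=p_v$ a cyclotomic twist, a uniform torsion ramification bound, and descent via the Herbrand function $\varphi_{k_v'/k_v}$. The ``Fontaine-type estimate'' you flag as the main obstacle does exist and is exactly what the paper cites: Theorem 1.1 of \cite{CL} (see also \cite{Ha}) gives a constant depending only on $k_v'$, $m_{p}$ and the weight interval, uniformly over all semi-stable representations and all pairs of lattices. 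Your twisting step is a harmless variant of the paper's: you normalize the weights into $[0,h]$ and then untwist, paying the (bounded) ramification break of $k_v'(\mu_{p^{m_p}})/k_v'$, whereas the paper chooses $t$ with $\chi_p^t\equiv 1 \bmod p^{m_p}$, so that $T\simeq T\otimes\mbb{Z}_p(nt)$ is literally unchanged and the weights land in $[0,t+h]$; both yield a constant depending only on $\mcal{L},h,m_p,v$.

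There is, however, one genuine gap: the case $p_v=\ell=2$. The ramification bound of \cite{CL} is proved for odd $p$; its extension to $p=2$ requires the condition $K_{p^{\infty}}\cap K_{\infty}=K$ in the Kisin-theoretic setup, which by Proposition 4.1.5 of \cite{Li} is guaranteed when $\sqrt{-1}$ lies in the base field. For this reason the paper begins with a reduction to the case $\sqrt{-1}\in k$: setting $k'=k(\sqrt{-1})$ and $\mcal{L}'_{v'}=G_{k'_{v'}}\cap\mcal{L}_v$, every $V\in\mcal{R}^{\ell}_{\mcal{L},h}(G_k)$ restricts into $\mcal{R}^{\ell}_{\mcal{L}',h}(G_{k'})$, whence $k_{\mcal{L},h,\mbf{m}}\subset k'_{\mcal{L}',h,\mbf{m}}$, and it suffices to prove finite maximal ramification break for the latter (the property is insensitive to the finite base change $k'/k$). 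This preliminary step is entirely absent from your proposal, and without it your appeal to the uniform bound is unjustified at places above $2$; with it, your argument goes through as written.
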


\begin{proof}
(2) follows from (1) by  Corollary  \ref{Main:HKF:cor}. 
To prove (1), 
we first reduce ourselves to the case where  $\sqrt{-1}\in k$. 
Put $k'=k(\sqrt{-1})$. For any finite place $v'$ of $k'$,
set $\mcal{L}'_{v'}:=G_{k'_{v'}}\cap \mcal{L}_v$ where $v$ is 
the finite place of $k$ below $v'$.
Then $\mcal{L}'_{v'}$ is an open subgroup of $I_{v'}$.
Put  $\mcal{L}'=(\mcal{L}'_{v'})_{v'}$ where $v'$ runs thorough all finite places of $k'$.
For any $V\in \mcal{R}^{\ell}_{\mcal{L},h}(G_k)$, we see
$V|_{G_{k'}}\in \mcal{R}^{\ell}_{\mcal{L}',h}(G_{k'})$.
Thus we have inclusions 
$k_{\mcal{L},h,\mbf{m}}\subset k_{\mcal{L},h,\mbf{m}}k' \subset k'_{\mcal{L}',h,\mbf{m}}$. 
It follows from this that, for the proof,
it is enough to show that  
$k'_{\mcal{L}',h,\mbf{m}}/k'$ has finite maximal ramification break everywhere.

In the rest of the proof, we assume $\sqrt{-1}\in k$. 
Take any finite place $v$ of $k$ and denote by $p$ the prime number below $v$. 
We take a finite Galois extension $k_v'/k_v$ 
such that the inertia subgroup $I_{k'_v}$ of $k'_v$ is contained in $\mcal{L}_v$.  
Let $c_v'>0$ be a real number such that $\mrm{Gal}(k'_v/k_v)^{c'_v}$ is trivial. 
Let $T$ be any 
torsion $\mbb{Z}_{\ell}$-representation coming from
$\mcal{R}^\ell_{\mcal{L},h}(G_k)$ with $\ell^{m_{\ell}}T=0$.
Thus there exist $V\in \mcal{R}^{\ell}_{\mcal{L},h}(G_k)$ and 
$G_k$-stable $\mbb{Z}_{\ell}$ lattices $L\subset L'$ in $V$
such that $T\simeq L'/L$. 
We denote by $k_v(T)$ the completion of the splitting field  $k(T)$ 
of the representation $T$ of $G_k$  at a place above $v$.  
For the proof, by Lemma \ref{Main:HKF:lem},
it suffices to show that there exists a real number $c_v>0$, depending only on 
$\mcal{L},h,m_p$ and $v$ (not on $T$), 
with the property that
$\mrm{Gal}(k_v(T)/k_v)^{c_v}$ is trivial.

First we consider the case $\ell\not= p$. 
Let $\rho\colon G_k\to GL_{\mbb{Q}_{\ell}}(V)$ be the continuous homomorphism 
describing the $G_k$-action on $V$.
It follows from the fact that $I_{k'_v}$-action on $V$  is  unipotent 
that the group $\rho(I_{k'_v})$ is pro-$\ell$. 
Since the group $G^{c'_v}_{k_v}$ is pro-$p$ and this is a subgroup of $I_{k'_v}$,
we obtain that $\rho(G^{c'_v}_{k_v})$ is trivial.
This in particular implies that $\mrm{Gal}(k_v(T)/k_v)^{c_v}$ is trivial. 

Next we consider the case $\ell=p$. 
Let $t>0$ be an integer so that $\chi^t_p \  \mrm{mod} \ p^{m_p}=1$ where $\chi_p\colon G_k\to \Zp^{\times}$
is the $p$-adic cyclotomic character.  
Since we have $T=T\otimes_{\mbb{Z}_p} \mbb{Z}_p(nt)$ for any integer $n$,
we may suppose that $\mrm{HT}_v(V)$ is contained in $[0,r]$ where $r:=t+h$.
Then 
$T|_{G_{k'_v}}$ is a quotient of two $G_{k_v'}$-stable lattices in a
semi-stable $\mbb{Q}_p$-representation with Hodge-Tate weights in $[0,r]$.
Since $T$ is killed by $p^{m_p}$, it follows from Theorem 1.1 of 
\cite{CL}\footnote{
Note that \cite{CL} studies integral $p$-adic Hodge theory for odd primes $p$.
However, the arguments given in \cite{CL} work also for the case $p=2$ under the condition that
$\hat{G}=G_{p^{\infty}} \rtimes H_K$, that is, $K_{p^{\infty}}\cap K_{\infty}=K$. 
This condition holds for any odd primes $p$. In the case $p=2$, the condition 
does not hold in general 
but it holds 
if $\sqrt{-1}\in K$. For this, see Proposition 4.1.5 of \cite{Li}. 
Our assumption  $\sqrt{-1}\in k$ at the beginning of this proof
is needed only here.
}
that there exists an integer $c_v''>0$, depending only on $k_v', m_p$ and $r$, 
such that 
$G_{k_v'}^{c_v''}$ acts on $T$ trivially (see also Theorem 1.1 of \cite{Ha}).  
Furthermore, we have
$$
G_{k_v'}^{c_v''}=G_{k_v'}\cap G_{k_v}^{\varphi_{k_v'/k_v}(c_v'')}
\supset G_{k_v}^{c_v'}\cap G_{k_v}^{\varphi_{k_v'/k_v}(c_v'')}
=G_{k_v}^{c_v}
$$
where $c_v:=\mrm{max}\{c_v',\varphi_{k_v'/k_v}(c_v'')\}$ and 
$\varphi_{k_v'/k_v}(u)$ is the Herbrand function for $k_v'/k_v$, that is, 
$$
\varphi_{k_v'/k_v}(u):=\int^u_{0} \frac{1}{(\mrm{Gal}(k_v'/k_v)_{0}:\mrm{Gal}(k_v'/k_v)_{t})}\ dt.
$$
Hence the action of $G_{k_v}^{c_v}$  on $T$  is trivial.
Therefore, we obtain that  
$\mrm{Gal}(k_v(T)/k_v)^{c_v}$ is trivial.
Note that  we may say that $c_v$ is determined 
by $\mcal{L},h,m_p$ and $v$. 

Now the constant $c_v$ satisfies the desired property.
\end{proof}

Let $d>0$ be an integer and  $\mbf{m}:=(m_p)_p$
a family of non-negative integers where $p$ ranges over the prime numbers. 
Let  $k_{d\mathchar`-\mrm{st},\mbf{m}}$ 
be the extension field of  $k$  obtained by adjoining all coordinates of 
elements of  $A[p^{m_p}]$  for all prime numbers  $p$  and all abelian varieties 
$A$ over $k$  which have semi-stable reduction everywhere
over an extension of  $k$  of degree at most  $d$. 
Note that we have no restriction on the dimensions of abelian varieties for this definition;
this is a remarkable difference between definitions of  
$k_{d\mathchar`-\mrm{st},\mbf{m}}$ and  $k_{g,\mbf{m}}$. 
As a consequence of Theorem \ref{geom}, 
we obtain the following.

\begin{corollary}
\label{HKF:st}
{\rm (1)}  The extension $k_{d\mathchar`-\mrm{st},\mbf{m}}/k$ has 
finite maximal ramification break everywhere.

\noindent
{\rm (2)} The field $k_{d\mathchar`-\mrm{st},\mbf{m}}$ is highly Kummer-faithful.
\end{corollary}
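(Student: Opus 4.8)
The plan is to deduce Corollary \ref{HKF:st} from Theorem \ref{geom} by exhibiting $k_{d\mathchar`-\mrm{st},\mbf{m}}$ as a subfield of some $k_{\mcal{L},h,\mbf{m}'}$ and then invoking the hereditary property of ``finite maximal ramification break everywhere'' under passage to subfields. As with the earlier theorems, part (2) is immediate from part (1) via Corollary \ref{Main:HKF:cor}, so the real content is part (1). The key observation is that for an abelian variety $A$ over $k$, the torsion module $A[p^{m_p}]$ is a torsion $\Z_p$-representation arising as $L'/L$ from the geometric representation $V=V_p(A)$ (with $L=p^{m_p}T_p(A)\subset L'=T_p(A)$), and this $V$ has length of Hodge-Tate weights bounded by $1$, since the Hodge--Tate weights of $V_p(A)$ lie in $\{0,1\}$.

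First I would choose the data $\mcal{L}$ and $h$ appropriately. Take $h=1$ to accommodate the Hodge--Tate weights of abelian-variety Tate modules. For the inertial level $\mcal{L}=(\mcal{L}_v)_v$, I would use the hypothesis that every $A$ in question acquires semi-stable reduction over an extension of $k$ of degree at most $d$: at each finite place $v$, there are only finitely many extensions of $k_v$ of degree $\le d$ (a $p$-adic field has finitely many extensions of each bounded degree), so I can let $\mcal{L}_v$ be the intersection of $I_v$ with the absolute Galois groups of all these finitely many extensions, giving an open normal subgroup of $I_v$. By the N\'eron--Ogg--Shafarevich criterion, if $A$ becomes semi-stable over such an extension $k'_v/k_v$, then $V_p(A)$ is semi-stable as a representation of $G_{k'_v}$, hence semi-stable when restricted to $\mcal{L}_v\subset G_{k'_v}$; thus the inertial level of every such $V_p(A)$ is bounded by $\mcal{L}$. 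Consequently each $A[p^{m_p}]$ comes from $\mcal{R}^p_{\mcal{L},1}(G_k)$, and its splitting field is a subfield of $k_{\mcal{L},1,\mbf{m}}$.

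With these choices I would conclude $k_{d\mathchar`-\mrm{st},\mbf{m}}\subset k_{\mcal{L},1,\mbf{m}}$, since the former is generated by exactly the splitting fields of the torsion representations $A[p^{m_p}]$ that the latter also contains. By Theorem \ref{geom}(1), $k_{\mcal{L},1,\mbf{m}}/k$ has finite maximal ramification break everywhere; and since this property passes to Galois subextensions (the relevant upper-numbering ramification groups surject under the quotient map, exactly as used in Lemma \ref{Main:HKF:lem}), the subextension $k_{d\mathchar`-\mrm{st},\mbf{m}}/k$ inherits it. This gives part (1), and part (2) follows.

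The main obstacle I anticipate is verifying the inertial-level bound uniformly and correctly: I must ensure that a single open normal subgroup $\mcal{L}_v\subset I_v$ works for \emph{all} abelian varieties $A$ over $k$ of semi-stable reduction type over degree-$\le d$ extensions, with no bound on $\dim A$. The finiteness of the set of degree-$\le d$ extensions of $k_v$ is what makes $\mcal{L}_v$ open, and the crucial point is that semi-stability is governed only by the field of definition of semi-stable reduction, not by the dimension; this is precisely why the construction tolerates unbounded dimension, in contrast to $k_{g,\mbf{m}}$. A secondary care point is the restriction ``geometric'' in the sense of \cite{FM} on the representations in $\mcal{R}^p_{\mcal{L},h}(G_k)$: I would note that $V_p(A)$ for an abelian variety over a number field is geometric, being unramified almost everywhere and de Rham (indeed semi-stable after finite base change) at the places above $p$.
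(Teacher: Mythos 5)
Your proposal is correct and takes essentially the same route as the paper's proof: choose $h=1$ (Hodge--Tate weights of $V_p(A)$ lie in $\{0,1\}$), let $\mcal{L}_v$ be the inertia subgroup of the finite composite of all extensions of $k_v$ of degree at most $d$, deduce the containment $k_{d\mathchar`-\mrm{st},\mbf{m}}\subset k_{\mcal{L},1,\mbf{m}}$, and conclude by Theorem \ref{geom} together with the heredity of ``finite maximal ramification break everywhere'' under passage to subextensions. The only cosmetic differences are that the paper forms $k_v'$ from the \emph{Galois} extensions of $k_v$ of degree at most $d$, and that the semistability input you label N\'eron--Ogg--Shafarevich is more precisely Grothendieck's inertial criterion (for $\ell\neq p$) combined with the fact that $V_p(A)$ is a semistable $p$-adic representation when $A$ has semistable reduction.
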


\begin{proof}
For each finite place $v$ of $k$,
let $k_v'$ be the composite field of all Galois extensions of $k_v$ of degree at most $d$.
Note that $k_v'$ is a finite extension of $k_v$.
Let $\mcal{L}_v$ ($\subset I_v$) be the inertia group of $k_v'$ 
and set $\mcal{L}:=(\mcal{L}_v)_v$.
Then the result follows from the fact that  
$k_{d\mathchar`-\mrm{st},\mbf{m}}$ is contained in 
$k_{\mcal{L},h,\mbf{m}}$ with $h=1$. 
\end{proof}

\end{document}